\documentclass[11pt]{article}

\usepackage[T1]{fontenc}
\usepackage[utf8]{inputenc}
\usepackage{lmodern}
\usepackage[margin=1in]{geometry}
\usepackage{amsmath,amssymb,amsthm,bm}
\usepackage{graphicx}
\usepackage{booktabs,array,multirow,makecell}
\usepackage{caption}
\usepackage{algorithm}
\usepackage{csquotes}
\usepackage{natbib}
\usepackage{url}
\usepackage[colorlinks=true,citecolor=blue,linkcolor=blue,urlcolor=blue]{hyperref}

\newtheorem{theorem}{Theorem}
\newtheorem{lemma}{Lemma}

\newtheorem{assumptionA}{Assumption}

\newtheorem{assumptionB}{Assumption}

\newtheorem{conditionC}{Condition}

\providecommand{\lesssim}{\mathrel{\raisebox{-.35ex}{$\stackrel{\scriptstyle <}{\scriptstyle \sim}$}}}
\providecommand{\gtrsim}{\mathrel{\raisebox{-.35ex}{$\stackrel{\scriptstyle >}{\scriptstyle \sim}$}}}

\title{Deep Neural Networks for Doubly Robust Estimation with Nonprobability Survey Samples}
\author{
Yufang Dai\\
\small School of Statistics and Data Science, Jiangxi University of Finance and Economics, Nanchang, China\\
\small Department of Mathematics and Statistics, University of Calgary, Calgary, Canada
\and
Shihua Luo\\
\small School of Statistics and Data Science, Jiangxi University of Finance and Economics, Nanchang, China
\and
Wendy Lou\\
\small Division of Biostatistics, Dalla Lana School of Public Health, University of Toronto, Canada
\and
Zilin Wang\\
\small Department of Mathematics, Wilfrid Laurier University, Waterloo, Canada
\and
Xuewen Lu\\
\small Department of Mathematics and Statistics, University of Calgary, Calgary, Canada\\
\small \href{mailto:xlu@ucalgary.ca}{xlu@ucalgary.ca}
}
\date{}

\begin{document}
\maketitle

\begin{abstract}
Integrating probability and nonprobability survey samples is an important problem in modern survey sampling. Nonprobability samples often contain rich outcome information but may lack population representativeness, whereas probability samples provide design-based auxiliary information but may not contain the study variable. We propose a deep neural network (DNN)-assisted doubly robust framework for estimating the finite population mean from these two data sources. The proposed method models the logit sampling score for the nonprobability sample as an unknown nonparametric function and estimates it by maximizing a pseudo-likelihood that combines information from the nonprobability sample and a reference probability sample. The DNN parameters are optimized using the ADAM algorithm. The resulting DNN-estimated sampling scores are incorporated into a DNN-assisted inverse-probability weighted estimator and a deep doubly robust estimator. We establish consistency and convergence rates under regularity conditions and evaluate the finite-sample performance of the proposed estimators through simulation studies and an empirical application using Pew Research Center and Behavioral Risk Factor Surveillance System data. The results suggest that the proposed estimators can improve robustness to parametric propensity-score misspecification, especially when the true selection mechanism is nonlinear.
\end{abstract}

\noindent\textbf{Keywords:} ADAM algorithm, Data integration, Deep neural networks, Doubly robust estimation, Nonprobability survey samples, Propensity scores

\medskip
\noindent\textbf{MSC 2020:} Primary: 62D05, 62G05; Secondary: 62J02, 62M45

\section*{Résumé}
L'int\'egration d'\'echantillons probabilistes et non probabilistes est un probl\`eme important dans l'\'echantillonnage moderne. Les \'echantillons non probabilistes contiennent souvent une information riche sur la variable d'int\'er\^et, mais peuvent manquer de repr\'esentativit\'e dans la population, tandis que les \'echantillons probabilistes fournissent une information auxiliaire fond\'ee sur le plan de sondage sans toujours contenir la variable d'int\'er\^et. Nous proposons un cadre doublement robuste assist\'e par r\'eseau neuronal profond pour estimer la moyenne d'une population finie \`a partir de ces deux sources de donn\'ees. La m\'ethode propos\'ee mod\'elise le logit du score d'\'echantillonnage de l'\'echantillon non probabiliste comme une fonction non param\'etrique inconnue, estim\'ee par maximisation d'une pseudo-vraisemblance combinant l'information provenant de l'\'echantillon non probabiliste et d'un \'echantillon probabiliste de r\'ef\'erence. Les param\`etres du r\'eseau neuronal profond sont optimis\'es au moyen de l'algorithme ADAM. Les scores d'\'echantillonnage ainsi estim\'es sont ensuite int\'egr\'es dans un estimateur assist\'e par r\'eseau neuronal profond et pond\'er\'e par l'inverse de la probabilit\'e, ainsi que dans un estimateur doublement robuste profond. Nous \'etablissons la coh\'erence et les taux de convergence sous des conditions de r\'egularit\'e, puis \'evaluons les performances en \'echantillon fini des estimateurs propos\'es au moyen d'\'etudes de simulation et d'une application empirique utilisant les donn\'ees du Pew Research Center et du Behavioral Risk Factor Surveillance System. Les r\'esultats sugg\`erent que les estimateurs propos\'es peuvent am\'eliorer la robustesse face \`a une mauvaise sp\'ecification param\'etrique du score de propension, en particulier lorsque le m\'ecanisme de s\'election v\'eritable est non lin\'eaire.

\section{Introduction} \label{sec:introduction}        % example of a section label

Probability sampling has long been the standard approach for data collection in official statistics and social research because probability-based sampling designs support valid inference for finite population quantities. Under known randomization mechanisms, probability samples provide a principled basis for estimating population characteristics and are often regarded as the gold standard for finite population inference. However, changes in social structure, communication patterns, and survey participation have created major challenges for traditional probability surveys, including low response rates, escalating costs, and time-consuming data collection \citep{aut2016}. To address these challenges, an increasing number of studies have utilized multiple data sources such as web-based survey panels, satellite information, and mobile sensor data generated by the development of internet technology and the popularity of smart devices. Such data are collectively referred to as nonprobability samples, and \cite{aut2015} stated that nonprobability survey data serve as an alternative and complement to official statistical survey data. However, because nonprobability samples are not selected according to a known probability sampling design, they may suffer from coverage error and selection bias, which complicates inference for finite population characteristics. Since probability and nonprobability samples have complementary strengths and limitations, it is natural to integrate information from both sources when estimating population quantities.

Existing methods for data integration can be broadly categorized into three classes. The first approach is calibration weighting. 
\cite{aut1992} first proposed the concept of calibration estimation; when population totals or means of auxiliary variables are known, calibration weights can be constructed using this auxiliary information.
\cite{aut2001} proposed a model-calibration estimator by constructing a superpopulation model between the study variable and auxiliary variables and incorporating full auxiliary information into calibration estimation. This technique forces the moments or the empirical distribution of auxiliary variables to be the same between the probability sample and the nonprobability sample, so that after calibration the weighted distribution in the nonprobability sample appears similar to that in the target population \citep{aut2011a}. Nevertheless, a critical limitation of this approach lies in its reliance on prior knowledge of population-level information, which is often unavailable in practical applications. 

The second approach is mass imputation. In this framework, the nonprobability sample is used as a training dataset to build a prediction model for the study variable, and the fitted model is subsequently applied to estimate the unobserved study variable for each unit in the probability sample. 
\cite{aut2007} proposed using the value of the nearest neighbor for mass imputation, but did not discuss its properties theoretically. \cite{aut2021} proposed using regression models for mass imputation and discussed its statistical properties, including consistent variance estimation. However, such a parametric mass imputation method is subject to model misspecification bias. 

The third approach is propensity score adjustment. In this approach, the probability of a unit being selected into the nonprobability sample, which is referred to as the propensity or sampling score, is modeled and estimated for all units in the nonprobability sample. \cite{aut2011} estimated participation rates by fitting a logistic regression model to the combined nonprobability sample and probability sample. Sample weights for the probability sample were scaled by a constant so that the scaled probability sample was assumed to represent the complement of the nonprobability sample. Each unit in the nonprobability sample was assigned a weight of one. The results show that the sum of the scaled weights of the probability and nonprobability combined samples is an estimate of the population size, but the estimator is biased especially when the participation rate of the nonprobability sample is large. \cite{aut2020} estimated the participation rate by manipulating the log-likelihood estimating equation. The resulting estimator is consistent and approximately unbiased regardless of the magnitude of participation rates. 

However, both mass imputation and propensity score adjustment rely on model specification, and model misspecification can lead to biased estimation. In order to enhance the efficiency and robustness of estimation, \cite{aut2020} proposed doubly robust (DR) estimators for the finite population mean by combining an estimated propensity score with an outcome regression model. While DR estimators guarantee consistency if at least one of the two models is correctly specified, they remain susceptible to bias when both models are misspecified. In their method, the relationship between $Y$ and $X$ is restricted to a prespecified parametric functional form, typically involving linear components. However, parametric procedures may suffer from bias if the functional form is misspecified or if the vector $X$ fails to include relevant interactions or predictors. In contrast, nonparametric methods have the ability to capture nonlinear trends and tend to be more robust to functional-form misspecification. In the last decade, 
machine learning methods have become increasingly competitive tools for modeling complex nonlinear relationships. Compared with classical nonparametric methods such as kernel smoothing, deep learning methods can be more scalable in high-dimensional settings when the target function has suitable smoothness or structural properties.

Among machine learning methods, deep neural networks (DNNs) provide a flexible framework for approximating complex nonlinear functions. Multilayer feedforward neural networks can approximate broad classes of continuous functions \citep{aut1993}. Under some smoothness and structural assumptions, \cite{aut2020b} showed that DNN estimators can circumvent the curse of dimensionality and achieve the optimal minimax rate of convergence. 
These properties make DNNs attractive for modeling participation mechanisms in nonprobability samples, where the relationship between auxiliary variables and sample inclusion may involve nonlinearities and interactions. However, DNN-based sampling-score estimation has not been fully developed for doubly robust inference with integrated probability and nonprobability survey samples.

%%%%%%%%
To fill this gap, we propose a DNN-assisted doubly robust estimator for the mean of a finite population and study its asymptotic properties under a sequence of growing finite populations. The main contributions of this paper are threefold. First, we develop a DNN-based sampling-score estimator by modeling the logit participation probability as an unknown nonparametric function and estimating it through a pseudo-likelihood that combines information from the nonprobability sample and a reference probability sample. Second, we incorporate the resulting DNN-estimated sampling scores into inverse-probability weighting and doubly robust estimation, leading to a deep doubly robust estimator, denoted DDR, that is designed to reduce sensitivity to parametric propensity-score misspecification while retaining the structure of doubly robust estimation. Third, we establish consistency and convergence rates for the proposed estimators under regularity conditions and evaluate their finite-sample performance through simulations and an empirical application. The simulation results suggest that the DDR estimator can substantially reduce bias and mean squared error relative to conventional parametric IPW and DR estimators, especially when the true selection mechanism contains nonlinear terms that are omitted from the parametric propensity-score model.

%%%%%%%%%

The paper proceeds as follows. 
\autoref{sec:Basic set-up} introduces the data-integration setup and reviews three commonly used estimators for the finite population mean. 
\autoref{sec:Methodology and asymptotic property} presents the proposed DNN-based sampling-score estimator, the ADAM algorithm used to optimize the loss function, the DIPW and DDR estimators, and the main asymptotic results. 
\autoref{sec:Simulation studies} reports simulation studies evaluating the finite-sample performance of the proposed estimators. 
\autoref{sec:Application} presents an empirical application using a nonprobability survey sample collected by the Pew Research Center (PRC), together with auxiliary information from the Behavioral Risk Factor Surveillance System (BRFSS). 
\autoref{sec:Conclusion} provides concluding remarks and discussion. 
\autoref{sec:Appendix} contains the regularity conditions and technical proofs.

%%%%%%%%%%%%%%%%%%%%%%%%%%%
%% Additional sections
\section{Basic set-up}
\label{sec:Basic set-up}

\subsection{Nonprobability sample and probability sample}
\label{sec:Notation: two samples}

Let $\mathcal{U}=\{1,2,...,N\}$ 
be the index set of units for the finite population with size N. For each unit $i$ (where $i=1,2,...,N$), there are corresponding values of the $r$-dimensional vector of the auxiliary variable $x_i$ and the scalar response variable $y_i$, i.e., $x_i \in \mathbb{R}^r$ and $y_i \in \mathbb{R}$. 
Without loss of generality, we assume that the domain of $x_i$ is taken to be $[0,1]^r$. 
Under the design-based framework, the realized finite population values 
$\mathcal{F}_N=\{(x_i,y_i): i \in \mathcal{U}\}$ are treated as fixed. For the asymptotic analysis of the DNN estimator, we additionally view this finite population as generated from an underlying superpopulation distribution. The parameter of interest is the finite population mean
\[
\mu_y = N^{-1}\sum_{i=1}^N y_i
\]
of the response variable.

Suppose that a nonprobability sample $S_A$ of size $n_A$ is selected from $\mathcal{U}$ by a self-selection mechanism. Let $\{(x_i,y_i): i \in S_A \}$ denote the observed dataset from the nonprobability sample. 
Define the sample membership indicator for unit $i$ as $R_i = I(i \in S_A)$; that is, $R_i = 1$ if $i \in S_A$ and $R_i = 0$ otherwise, for $i=1,\ldots, N$. The underlying participation probability for unit $i$ in the nonprobability sample is defined as
\[
\pi_i^A = E_q(R_i\mid x_i,y_i)=P_q(R_i=1\mid x_i,y_i), \quad i=1,2,\ldots,N,
\]
where the subscript $q$ refers to the propensity score model for the selection mechanism of $S_A$. The corresponding implicit nonprobability sample weight is $w_i = 1/\pi_i^{A}$ for $i \in \mathcal{U}$.

The probability sample $S_B$ is assumed to be independently drawn from the same finite population under a known probability sampling design. Let $\{(x_i,d_i^B): i \in S_B\}$ denote the observed dataset from the probability sample, where $x_i$ is available from an existing survey, $d_i^B=1/\pi_i^B$ are the design weights, and $\pi_i^B=P(i \in S_B)$ are the first-order inclusion probabilities under the sampling design for $S_B$. Note that the response variable $y_i$ is not observed in the reference probability sample $S_B$. Define the combined sample as $S=S_A\cup S_B$, with $n=n_A+n_B$. It is assumed that there are no overlapping units between the two samples. The observed and unobserved components of the two samples are summarized in Table \ref{tab:tsd}.

\begin{table}[h]
	\caption{Two sources of data}
	\label{tab:tsd}
	\centering
	\begin{tabular}{l c c c }
		\hline
		\textbf{Sample} & \textbf{Sampling weight $\pi^{-1}$} & \textbf{Covariate $X$} & \textbf{Study variable $Y$} \\
		\hline
		Nonprobability sample & ? & $\surd$ & $\surd$ \\
		$\vdots$ & $\vdots$ & $\vdots$ & $\vdots$ \\
		$n_A $ & ? & $\surd$ & $\surd$ \\
		\hline
		Probability sample & $\surd$ & $\surd$ & ? \\
		$\vdots$ & $\vdots$ & $\vdots$ & $\vdots$ \\
		$n_A + n_B$ & $\surd$ & $\surd$ & ? \\
		\hline
	\end{tabular}
	\begin{flushleft}
		\footnotesize{$^1$Sample A denotes the nonprobability sample and sample B denotes the probability sample. The symbols `$\surd$' and `?' indicate observed and unobserved quantities, respectively.}
	\end{flushleft}
\end{table}

\subsection{Commonly used estimators for the finite population mean} 
\label{sec:Existing estimators}

Estimation of the population mean using the two samples generally requires modeling either the sampling score function $\pi_i^A$ for the nonprobability sample, the outcome mean function $m(x)$, or both. In practice, these functions are typically unknown and must be estimated. To address this issue, many researchers have modelled $\pi_i^A$ and $m(x)$ via parametric specifications $\pi_i^A(x^\top\theta)$ and $m(x^\top\beta)$, where $\theta$ and $\beta$ denote vectors of unknown model parameters. Several estimators of $\mu_y$ have been developed under different modeling assumptions. The following discussion examines three representative approaches and summarizes their respective theoretical properties and practical limitations within the context of survey sampling. Following \cite{aut2020}, we define three estimators of the population parameter $\mu_y$ as follows.

\noindent \textbf{Outcome regression estimator}
\begin{equation}
	\label{eq:Ore}
	\hat{\mu}_\mathrm{REG}=\frac{1}{\hat{N}^{B}}\sum_{i\in\mathcal{S}_B}d^{B}_i\hat{y}_i,
\end{equation}
where $d_i^B=1/\pi_i^B$ denotes the design weight for unit $i$ in the probability sample $S_B$, $\hat{N}^B=\sum_{i \in S_B} d_i^B$ is the estimated population size based on the probability sample, and $\hat{y}_i$ is the predicted value of the study variable for unit $i$, obtained from a fitted regression model.

Suppose that the finite population $\{(x_i,y_i): i \in \mathcal{U}\}$ can be regarded as a random sample drawn from the underlying model
\begin{equation*}
	y_i = m(x_i)+\varepsilon_i, \quad i=1,\ldots,N,
\end{equation*}
where $m(x_i)= E(y_i\mid x_i)$, which can take a parametric form such as $m(x_i)= x_i^\top\beta$ or a specified nonlinear parametric form. The error terms $\varepsilon_i$ are independent with $E(\varepsilon_i)=0$ and $V(\varepsilon_i)=v(x_i)\sigma^2$. The dataset from the nonprobability sample can be used to fit the outcome model. For the linear regression model $m(x_i)=x_i^\top\beta$ and the homogeneous variance structure $v(x_i)=1$, the least squares estimator of $\beta$ is given by
\begin{equation*}
	\hat{\beta}=\left(\sum_{i\in S_A}x_ix_i^\top\right)^{-1}\left(\sum_{i \in S_A}x_iy_i\right).
\end{equation*}
For a unit $i$ in the reference probability sample $S_B$, the predicted value of $y_i$ is then given by $\hat{y}_i = x_i^\top\hat{\beta}$. The validity of the regression estimator $\hat{\mu}_\mathrm{REG}$ relies on a correct specification of $m(x^\top\beta)$ and the consistency of $\hat{\beta}$. If $m(x^\top\beta)$ is misspecified or $\hat{\beta}$ is inconsistent, $\hat{\mu}_\mathrm{REG}$
can be biased. 

\noindent  \textbf{Inverse probability of sampling score weighting estimator}
\begin{equation}
	\label{eq:Ipsswe}
	\hat{\mu}_\mathrm{IPW}=\frac{1}{\hat{N}^{A}}\sum_{i\in\mathcal{S}_A}\frac{1}{\hat{\pi}^A_i}y_i,
\end{equation}
where $\hat{N}^A=\sum_{i \in S_A}1/\hat{\pi}_i^A$, $y_i$ is the observed value of the study variable in the nonprobability sample, and $\hat{\pi}_i^A$ is the estimated sampling score. When a parametric propensity score model is used, $\hat{\pi}_i^A$ is commonly obtained from a fitted logistic regression model. In this paper, we replace this parametric specification with the DNN-based sampling-score estimator described in \autoref{sec:Estimation of propensity scores by DNN}.

\noindent  \textbf{Doubly robust estimator}
\begin{equation}
	\label{eq:Dre}
	\hat{\mu}_\mathrm{DR}=\frac{1}{\hat{N}^{A}}\sum_{i\in\mathcal{S}_A}\frac{1}{\hat{\pi}_i^A}\{y_i-m(x_i,\hat{\beta})\}+\frac{1}{\hat{N}^{B}}\sum_{i\in\mathcal{S}_B}\frac{1}{\pi_i^B} m(x_i,\hat{\beta}).
\end{equation}
The doubly robust estimator $\hat{\mu}_\mathrm{DR}$ possesses an important theoretical property: under fixed-dimensional covariates $X$, it remains consistent as long as either the propensity score model $\pi_i^A(x^\top\theta)$ or the outcome model $m(x^\top\beta)$ is correctly specified, without requiring both to hold simultaneously. This double robustness property enhances the estimator's reliability in practical applications where model misspecification may occur.
%%%%%%%%%%%%%%%%%%
\section{Methodology and asymptotic property}
\label{sec:Methodology and asymptotic property}

In this section, we develop a DNN-based procedure for estimating the sampling scores of units in the nonprobability sample and then construct the corresponding inverse-probability weighted and doubly robust estimators for the finite population mean. We first consider the hypothetical case where $x_i$ is observed for all units in the finite population $\mathcal{U}$, whereas $y_i$ is observed only for units in the nonprobability sample $S_A$. The logit transformation of the participation probability $\pi_i^A$ is modeled as an unknown nonparametric function of the auxiliary variables, rather than being restricted to a prespecified linear form. We approximate this unknown function using a deep neural network (DNN), whose parameters are optimized by the ADAM algorithm. The resulting DNN-estimated sampling scores are then incorporated into the inverse-probability weighting estimator and the doubly robust framework, leading to the DIPW and DDR estimators, respectively.

\subsection{Estimation of propensity scores by DNN}
\label{sec:Estimation of propensity scores by DNN}

We assume a nonparametric model for the participation probability $\pi_i^A=\pi^A(x_i)$, specified through the logit link as
\begin{equation}
	\label{eq:Lf}
	\log\left(\frac{\pi_i^A}{1-\pi_i^A}\right)=g_0(x_i), \quad  i \in \mathcal{U},
\end{equation}
where $g_0(x_i)$ is an unknown real-valued nonparametric function, and $x_i$ is the vector of covariates associated with the $i$th unit in the finite population. If $x_i$ were observed for all units in $\mathcal{U}$, the population likelihood function for the participation indicators could be written as
\begin{equation}
	\label{eq:Lfdnn}
	L(g)=\prod_{i=1}^N \{\pi_i^A\}^{R_i}\{1-\pi_i^A\}^{1-R_i}.
\end{equation}
The corresponding maximum likelihood estimator of $\pi_i^A$ is given by
\[
\hat{\pi}_i^A=\frac{1}{1+\exp\{-\hat{g}(x_i)\}},
\]
where $\hat{g}$ is an estimator of $g_0$. The population log-likelihood is
\begin{equation}
	\label{eq:Llf}
	\begin{aligned}       
	 \ell(g)
	 &=\sum_{i=1}^N\big[R_i\log\pi_i^A + (1-R_i)\log(1-\pi_i^A)\big] \\
	 &=\sum_{i \in S_A}\log\frac{\pi_i^A}{1-\pi_i^A} + \sum_{i=1}^N\log(1-\pi_i^A).
	\end{aligned}
\end{equation}
However, the log-likelihood function in \eqref{eq:Llf} cannot be directly used in practice because the auxiliary variables $x_i$ are not available for all units in the finite population, and the population size $N$ may also be unknown. To address this issue, we estimate $g_0$ by maximizing the following pseudo-log-likelihood function:
\begin{equation}
	\label{eq:Pllf}
	\ell^*(g)=\sum_{i \in S_A}\log\frac{\pi_i^A}{1-\pi_i^A} + \sum_{i \in S_B} d_i^B\log(1-\pi_i^A), 
\end{equation}
where the population total $\sum_{i=1}^N\log(1-\pi_i^A)$ in $\ell(g)$ is replaced by the Horvitz--Thompson estimator $\sum_{i \in S_B} d_i^B\log(1-\pi_i^A)$ based on the reference probability sample $S_B$.

Under the nonparametric model \eqref{eq:Lf}, the pseudo-log-likelihood function \eqref{eq:Pllf} can be written as  
\begin{equation}
	\label{eq:Plldnnf}
	\ell^*(g)=\sum_{i \in S_A}g(x_i) - \sum_{i \in S_B} d_i^B\log\{1+\exp(g(x_i))\}.
\end{equation}
We define the loss function as $-\ell^*(g)$. In the sequel, a DNN is used to approximate $g_0$, and the DNN estimator $\hat{g}$ is obtained by applying the ADAM optimization algorithm to maximize the pseudo-log-likelihood function, or equivalently to minimize $-\ell^*(g)$. Once $\hat{g}$ is obtained, the estimated participation probability is
\[
\hat{\pi}_i^A=\frac{1}{1+\exp\{-\hat{g}(x_i)\}}.
\]
Using the DNN-estimated sampling scores, we define the DNN-assisted inverse-probability weighted estimator, denoted by $\hat{\mu}_\mathrm{DIPW}$, as
\begin{equation}
	\label{eq:Ipsswdnne}
	\hat{\mu}_\mathrm{DIPW}=\frac{1}{\hat{N}^{A}}\sum_{i\in S_A}\frac{1}{\hat{\pi}^A_i}y_i,
\end{equation}
where $\hat{N}^{A}=\sum_{i\in S_A}1/\hat{\pi}_i^A$.

We now briefly introduce the DNN class used to approximate $g_0$, following the notation of \cite{aut2022}; for further details on DNNs, see \cite{aut2016a}. Let $\mathbb{N}_{+}$ be the set of all positive integers. Given $K \in \mathbb{N}_{+}$ and $\mathbf{p}=(p_{0}, \ldots, p_{K}, p_{K+1})^{\top} \in \mathbb{N}_{+}^{K+2}$, a $(K+1)$-layer DNN with layer-width vector $\mathbf{p}$ is a composite function $g: \mathbb{R}^{p_{0}} \to \mathbb{R}^{p_{K+1}}$ recursively defined as  
\begin{equation}
	\label{eq:Gxf}
	\begin{aligned} 
		g(x) &= W_{K} g_{K}(x) + v_{K}, \\
		g_{K}(x) &= \sigma\left(W_{K-1} g_{K-1}(x) + v_{K-1}\right), \ldots, \\
		g_{1}(x) &= \sigma\left(W_{0} x + v_{0}\right).
	\end{aligned}   
\end{equation}
Here, $K$ denotes the depth of the network, and $\mathbf{p}$ lists the width of each layer. Specifically, $p_0$ is the dimension of the input variable, $p_1,\ldots,p_K$ are the dimensions of the $K$ hidden layers, and $p_{K+1}$ is the dimension of the output layer. The matrices $W_{k} \in \mathbb{R}^{p_{k+1} \times p_{k}}$ and vectors $v_{k} \in \mathbb{R}^{p_{k+1}}$, for $k = 0, \ldots, K$, are the parameters of the DNN. The matrix entry $(W_{k})_{ij}$ is the weight linking the $j$th neuron in layer $k$ to the $i$th neuron in layer $k+1$, and the vector entry $(v_{k})_{i}$ is the shift term associated with the $i$th neuron in layer $k+1$. 

The function $\sigma$ is a deterministic activation function applied componentwise to vectors, that is,
\[
\sigma\left[(x_{1}, \ldots, x_{p_k})^{\top}\right]
=
\left(\sigma(x_{1}), \ldots, \sigma(x_{p_k})\right)^{\top}.
\]
Thus, \(g_{k} = (g_{k1}, \ldots, g_{k p_{k}})^{\top}: \mathbb{R}^{p_{k-1}} \to \mathbb{R}^{p_{k}}\), for \(k = 1, \ldots, K\). While many activation functions are available, we focus on the rectified linear unit (ReLU), introduced in \cite{aut2010b}, defined by $\sigma(x)=\max\{x,0\}$.

Following \cite{aut2022}, we consider the following class of DNNs:
\begin{equation}
	\label{eq:Classdnn}
	\begin{aligned}
		\mathcal{G}(K, \mathbf{p}) = \biggl\{ g: \, & g \text{ is a DNN with } (K+1) \text{ layers and width vector } \mathbf{p}, \\
		& \max\left\{\|W_{k}\|_{\infty}, \|v_{k}\|_{\infty}\right\} \leq 1, \text{ for all } k = 0, \ldots, K \biggr\},
	\end{aligned}   
\end{equation}
where $\|\cdot\|_{\infty}$ denotes the sup-norm of a matrix or vector. For $s \in \mathbb{N}_{+}$ and $D>0$, we further define the class of sparse neural networks as
\begin{equation}
	\label{eq:Classsnn}
	\mathcal{G}(K, s, \mathbf{p}, D) := \left\{ g \in \mathcal{G}(K, \mathbf{p}): \sum_{k=0}^{K} \{\|W_{k}\|_{0} + \|v_{k}\|_{0}\} \leq s, \, \|g\|_{\infty} \leq D \right\},   
\end{equation}
where $\|\cdot\|_{0}$ denotes the number of nonzero entries of a matrix or vector, and $\|g\|_{\infty}$ is the sup-norm of the function $g$. For a sufficiently large fixed constant $D>0$, let $\mathcal{G}_D=\mathcal{G}(K,s,\mathbf{p},D)$. We estimate $g_0$ by
\begin{equation}
	\label{eq:mle}
	\hat{g}=\arg \max_{g \in \mathcal{G}_D } \ell^*(g)
	= \arg \min_{g \in \mathcal{G}_D } \left\{ -\ell^*(g)\right\}.
\end{equation}

To compute $\hat{g}$, we use the ADAM optimization algorithm to minimize the loss function $-\ell^*(g)$. The trainable parameters of the DNN, consisting of all weight matrices and bias vectors, are denoted by $\Theta$. The ADAM algorithm is a variant of stochastic gradient descent that iteratively updates the parameter estimates by adaptively estimating the first and second moments of the stochastic gradients of the empirical loss \citep{aut2014a}. It has been widely used in DNN-based statistical models, including the penalized deep partially linear Cox model studied by \cite{aut2024}.

In our setting, let
\[
f(\Theta)= -\ell^*\{g(\cdot \mid \Theta)\}.
\]
The ADAM algorithm is summarized in Algorithm 1, where the square, division, and square root operations are applied elementwise. For initialization, the bias terms are set to zero, and the weights are initialized using the Xavier initialization scheme, which helps stabilize gradient propagation during training \citep{aut2010b}. To improve numerical stability, a small constant $\epsilon_0>0$ is added to the denominator. Excessive iterations may lead to overfitting, so early stopping is used as a regularization strategy. Early stopping can mitigate overfitting and has been shown to support consistency of trained neural networks under suitable conditions \citep{aut2021a}.

\begin{algorithm*}
	\centering
	\begin{tabular}{@{}r l@{}}
		\toprule
		\multicolumn{2}{l}{\textbf{Algorithm 1: ADAM Algorithm}} \\
		\midrule
		\multicolumn{2}{l}{\textbf{Input:} Initial parameters $\Theta^{(0)}$, learning rate $\gamma$, 
			decay rates $r_1,r_2$, small constant $\epsilon_0$, threshold $l$} \\
		1  & Initialize $m^{(0)} \leftarrow 0$, $v^{(0)} \leftarrow 0$, $t \leftarrow 0$ \\
		2  & repeat \\
		3  & {\quad\vrule\ } Compute gradient $G^{(t)} \leftarrow \nabla_{\Theta} f(\Theta^{(t)})$ ; \\
		4  & {\quad\vrule\ } Update first moment: $m^{(t+1)} \leftarrow r_1 m^{(t)} + (1-r_1)G^{(t)}$ ; \\
		5  & {\quad\vrule\ } Update second moment: $v^{(t+1)} \leftarrow r_2 v^{(t)} + (1-r_2)\{G^{(t)}\}^2$ ; \\
		6  & {\quad\vrule\ } Bias correction: $\hat{m}^{(t+1)}=\dfrac{m^{(t+1)}}{1-r_1^{t+1}}$ ; \\
		7  & {\quad\vrule\ } Bias correction: $\hat{v}^{(t+1)}=\dfrac{v^{(t+1)}}{1-r_2^{t+1}}$ ; \\
		8  & {\quad\vrule\ } Update parameters: 
		$\Theta^{(t+1)} \leftarrow \Theta^{(t)}
		- \gamma \dfrac{\hat{m}^{(t+1)}}{\sqrt{\hat{v}^{(t+1)}}+\epsilon_0}$ ; \\
		9  & {\quad\vrule\ } $t \leftarrow t+1$ ; \\
		10 & until $\|\Theta^{(t)} - \Theta^{(t-1)}\|_2 \leq l$ \\
		\multicolumn{2}{l}{\textbf{Output:} Optimized parameters $\hat{\Theta}=\Theta^{(t)}$ and
		                         $\hat{g}=g(\cdot\mid \hat{\Theta})$.} \\
		\bottomrule
	\end{tabular}
\end{algorithm*}

\subsection{Deep doubly robust estimation by DNN}
\label{sec:Doubly robust estimation by DNN}

Using a fitted outcome regression model to predict the study variable for units in the probability sample and the DNN-estimated sampling scores for units in the nonprobability sample, we construct a deep doubly robust estimator for the finite population mean. The resulting estimator, denoted by $\hat{\mu}_\mathrm{DDR}$, is defined as
\begin{equation}
	\label{eq:Drednn}
	\hat{\mu}_\mathrm{DDR}
	=
	\frac{1}{\hat{N}^{A}}\sum_{i\in S_A}\frac{1}{\hat{\pi}_i^A}\{y_i-m(\bm{x}_i,\hat{\bm{\beta}})\}
	+
	\frac{1}{\hat{N}^{B}}\sum_{i\in S_B}\frac{1}{\pi_i^B} m(\bm{x}_i,\hat{\bm{\beta}}),
\end{equation}
where $\hat{N}^{A}=\sum_{i\in S_A}1/\hat{\pi}_i^A$ and $\hat{N}^{B}=\sum_{i\in S_B}1/\pi_i^B$. The first term uses the DNN-estimated sampling scores to weight the residuals in the nonprobability sample, whereas the second term uses the reference probability sample to estimate the population mean of the fitted outcome regression function.

\subsection{Asymptotic property}
\label{sec:Asymptotic property}

In this subsection, we establish the asymptotic properties of the DNN estimator $\hat{g}$ and the resulting estimators $\hat{\mu}_\mathrm{DIPW}$ and $\hat{\mu}_\mathrm{DDR}$ under regularity conditions. Detailed proofs are provided in \autoref{sec:Appendix}. 

We first introduce notation used in the theoretical results. For any vector 
$v=(v_{1}, \ldots, v_{p})^{\top} \in \mathbb{R}^{p}$, let
\[
\|v\|=\left(\sum_{i=1}^{p} v_i^2\right)^{1/2}, \qquad
\|v\|_{\infty}=\max_i |v_i|.
\]
For any matrix $W=(w_{ij}) \in \mathbb{R}^{m \times n}$, let $\|W\|_{\infty}=\max_{i,j}|w_{ij}|$. For any function $h$, let $\|h\|_{\infty}=\sup_x |h(x)|$ and let $\|h\|_{L^2}$ denote its $L^2$ norm. In particular,
\[
\|\hat{g}-g_0\|_{L^2}
=
\left[E\{\hat{g}(X)-g_0(X)\}^2\right]^{1/2}.
\]
Denote $a_n \lesssim b_n$ if $a_n \leq c b_n$ for some constant $c>0$, and $a_n \gtrsim b_n$ if $a_n \geq c b_n$ for some constant $c>0$. We write $a_n \asymp b_n$ if both $a_n \lesssim b_n$ and $a_n \gtrsim b_n$ hold. Furthermore, $\mathbb{P}_n$ and $\mathbb{P}$ denote the empirical measure and the probability measure, respectively. For example, for a function $f$, we write
\[
\mathbb{P}_{n} f=\frac{1}{n}\sum_{i=1}^{n}f(X_i), 
\qquad
\mathbb{P} f=\int f\,d\mathbb{P}.
\]

Some restrictions on the nonparametric function $g_0$ are required. We assume that $g_0$ belongs to a  H\"older class of smooth functions, as in \cite{aut2020b} and \cite{aut2022}. Specifically, for smoothness parameter $\gamma>0$, radius $M>0$, and domain $\mathbb{D}\subset\mathbb{R}^{r}$, define
\begin{equation*}
		\mathcal{H}_{r}^{\gamma}(\mathbb{D}, M)
		=
		\left\{
		g\colon\mathbb{D}\to\mathbb{R}\text{ such that }
		\sum_{\alpha:|\alpha|<\gamma}\left\|\partial^{\alpha} g\right\|_{\infty}
		+
		\sum_{\alpha:|\alpha|=\lfloor\gamma\rfloor}
		\sup_{x,y\in\mathbb{D},\,x\neq y}
		\frac{\left|\partial^{\alpha}g(x)-\partial^{\alpha}g(y)\right|}
		{\|x-y\|_{\infty}^{\gamma-\lfloor\gamma\rfloor}}
		\leq M
		\right\},
\end{equation*}
where $\lfloor\gamma\rfloor$ is the largest integer strictly smaller than $\gamma$, $\partial^{\alpha}:=\partial^{\alpha_{1}}\cdots\partial^{\alpha_r}$ with $\alpha=(\alpha_1,\ldots,\alpha_r)^\top$, and $|\alpha|=\sum_{j=1}^r\alpha_j$.

Let $q\in\mathbb{N}_{+}$, $M>0$,
$\bm{\gamma}=(\gamma_0,\ldots,\gamma_q)^\top\in\mathbb{R}_{+}^{q+1}$,
$\mathbf{d}=(d_0,\ldots,d_{q+1})^\top\in\mathbb{N}_{+}^{q+2}$, and
$\tilde{\mathbf{d}}=(\tilde d_0,\ldots,\tilde d_q)^\top\in\mathbb{N}_{+}^{q+1}$ with $\tilde d_j\leq d_j$, $j=0,\ldots,q$. We assume that $g_0$ belongs to the following composite smoothness class:
\begin{equation}
	\label{eq:Csfclass}
	\begin{aligned}
		\mathcal{H}(q, \bm{\gamma}, \mathbf{d}, \tilde{\mathbf{d}}, M)
		:=
		\big\{ g = g_q \circ \cdots \circ g_0 \; \text{such that }&
		g_i=(g_{i1},\ldots,g_{i d_{i+1}})^\top, \\
		& g_{ij}\in \mathcal{H}_{\tilde d_i}^{\gamma_i}
		([a_i,b_i]^{\tilde d_i},M), \\
		& \text{for some } |a_i|,|b_i|\leq M
		\big\}.
	\end{aligned}
\end{equation}
Functions in this class are characterized by two types of dimension parameters: $\mathbf{d}$ and $\tilde{\mathbf{d}}$. The vector $\mathbf{d}$ gives the ambient dimensions of the component functions, whereas $\tilde{\mathbf{d}}$ represents the intrinsic dimensions governing the complexity of the composition structure.

Define
\[
\tilde{\gamma}_i
=
\gamma_i\prod_{k=i+1}^q(\gamma_k\wedge 1),
\qquad
\gamma_n
=
\max_{i=0,\ldots,q}
n^{-\tilde{\gamma}_i/(2\tilde{\gamma}_i+\tilde d_i)},
\]
where $a\wedge b=\min(a,b)$. For notational simplicity, we use $n=N$ in the following theoretical statements and their proofs.

\begin{theorem}[Consistency and convergence rate of $\hat{g}$]
	\label{thm:g-rate}
	Under assumptions \ref{ass:A1}--\ref{ass:A3} and \ref{ass:B1}--\ref{ass:B2}, and regularity conditions \ref{cond:C1} and \ref{cond:C8}, we have
	\[
	\|\hat{g} - g_0\|_{L^2([0,1]^r)}
	=
	O_p(\gamma_n\log^2 n).
	\]
\end{theorem}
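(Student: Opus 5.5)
The plan follows the standard route for sieve/DNN M-estimators (Schmidt-Hieber 2020; Zhong et al. 2022): an oracle inequality that splits the error into an approximation error and a stochastic error, with the two balanced.

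Write the normalized criterion as
\[
\mathbb{M}_n(g)=n^{-1}\ell^*(g)=n^{-1}\Big[\sum_{i\in S_A}g(x_i)-\sum_{i\in S_B}d_i^B\log\{1+e^{g(x_i)}\}\Big].
\]
Its population counterpart, taken under the superpopulation model together with the design expectation over $S_B$ and the selection mechanism $R_i$, is
\[
\mathbb{M}(g)=E\big[\pi^A(X)g(X)-\log\{1+e^{g(X)}\}\big].
\]
This is the expected log-likelihood of the Bernoulli model $R\mid X$. It is maximized at $g_0$.

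\textbf{Step 1: curvature.} On $\mathcal{G}_D$ and the H\"older ball we have $\|g\|_\infty\le D$. The second derivative of $t\mapsto\log(1+e^t)$ is $\pi(1-\pi)$, which on $[-D,D]$ is bounded above and below by positive constants. A second-order Taylor expansion about $g_0$ therefore gives
\[
\mathbb{M}(g_0)-\mathbb{M}(g)\asymp\|g-g_0\|_{L^2}^2 .
\]
Here I use the covariate density bounds that I expect are among \ref{ass:A1}--\ref{ass:A3} and \ref{cond:C1}.

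\textbf{Step 2: approximation.} I invoke Theorem 5 of Schmidt-Hieber (2020) for the composite class \eqref{eq:Csfclass}. It says that with depth $K\asymp\log n$, width $\max_k p_k\asymp n\gamma_n^2$ and sparsity $s\asymp n\gamma_n^2\log n$ (conditions I expect \ref{ass:B1}--\ref{ass:B2} and \ref{cond:C8} to impose), there exists $g^*\in\mathcal{G}_D$ with
\[
\|g^*-g_0\|_\infty\lesssim\gamma_n .
\]
By Step 1, this yields $\mathbb{M}(g_0)-\mathbb{M}(g^*)\lesssim\gamma_n^2$.

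\textbf{Step 3: stochastic error.} Since $\mathbb{M}_n(\hat g)\ge\mathbb{M}_n(g^*)$, we get
\[
\|\hat g-g_0\|_{L^2}^2\lesssim\gamma_n^2+\sup_{g\in\mathcal{G}_D,\ \|g-g_0\|_{L^2}\le\delta}\big|(\mathbb{M}_n-\mathbb{M})(g)-(\mathbb{M}_n-\mathbb{M})(g_0)\big| .
\]
I would split $\mathbb{M}_n-\mathbb{M}$ into two empirical-process parts:
\begin{itemize}
\item a superpopulation/selection part, an i.i.d.\ average of $R_ig(x_i)-\log\{1+e^{g(x_i)}\}$ over $\mathcal{U}$;
\item a design part, the Horvitz--Thompson error $\sum_{S_B}d_i^B h(x_i)-\sum_{\mathcal{U}}h(x_i)$ for the bounded Lipschitz transforms $h=\log(1+e^g)$.
\end{itemize}
For both parts I use the covering bound of Schmidt-Hieber (Lemma 5),
\[
\log N(\epsilon,\mathcal{G}_D,\|\cdot\|_\infty)\lesssim sK\log(n/\epsilon)\lesssim n\gamma_n^2\log^2 n .
\]
The entropy integral then gives a modulus $\phi_n(\delta)\asymp\delta\sqrt{sK\log n}\,/\sqrt n$. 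Applying the rate theorem of van der Vaart--Wellner (Theorem 3.2.5) to $\phi_n(\delta)\le\sqrt n\,\delta^2$ gives
\[
\delta_n\asymp\gamma_n\log^{2}n
\]
up to the log factors accumulated from $K$ and $s$. Combined with the approximation term $\gamma_n$, this is the stated rate.

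\textbf{Main obstacle.} The hard step is the design-based part of Step 3. The Horvitz--Thompson empirical process over a class that grows with $n$ is not i.i.d. Controlling its supremum requires a maximal inequality for the sampling design: either bounded weights $d_i^B\le cN/n_B$ with $n_B\asymp N$, plus an exponential (Bernstein-type) inequality under the design as in Breslow--Wellner-type results for Poisson/rejective sampling, or a chaining argument using the covering number above. I would first try to assume Poisson sampling or a negatively associated design, so that a Bernstein inequality holds conditionally on $\mathcal{F}_N$. A union bound over an $\epsilon$-net combined with the Lipschitz property of $\log(1+e^t)$ then reduces this part to the same entropy bound. That should give a term of the same order $\gamma_n\log^2 n$, and the $\log^2 n$ in the statement absorbs the extra logarithm.
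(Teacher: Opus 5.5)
Your architecture matches the paper's. You use curvature of $t\mapsto\log(1+e^t)$ on a sup-norm-bounded class (Lemma~\ref{lem:Lemma2}), a sparse-ReLU approximant of the composite H\"older $g_0$, and a sparse-network entropy bound for the i.i.d.\ superpopulation part (Lemma~\ref{lem:Lemma3}). You finish with a van der Vaart--Wellner rate theorem; your basic inequality around $g^*$ with Theorem~3.2.5 amounts to the paper's sieve Theorem~3.4.1 around $\tilde g$.

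The genuine difference is the Horvitz--Thompson term. The paper handles it with Condition~\ref{cond:C8} and a one-line claim that the modulus of Lemma~\ref{lem:Lemma3} carries over to $L_N^*$ in \eqref{eq:appendix-A7}. You instead propose to prove a localized maximal inequality for the design process, conditionally on $\mathcal{F}_N$, using a design Bernstein inequality and a union bound over a sup-norm net.

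Your instinct is sound. Condition~\ref{cond:C8} is a \emph{non-localized} $O_p(N^{-1/2})$ bound. Inserted into your basic inequality, it yields only $\|\hat g-g_0\|_{L^2}^2\lesssim\tau_n^2+O_p(N^{-1/2})$ with $\tau_n=\gamma_n\log^2 n$. That is the rate $\tau_n+N^{-1/4}$, which is weaker than claimed whenever $2\tilde\gamma_i>\tilde d_i$ for all $i$. So the paper's step \eqref{eq:appendix-A7} tacitly uses a localized design bound, and your route turns it into an actual lemma.

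The price is hypotheses outside the theorem's list: a Poisson or negatively associated design, and weights $d_i^B\lesssim N/n_B$. The latter is Condition~\ref{cond:C5}, which Theorem~\ref{thm:g-rate} does not assume. To stay within the stated framework, state the needed ingredient as a localized version of Condition~\ref{cond:C8}: the supremum of HT increments over $\mathcal{B}_\delta$ is bounded by $\phi_n(\delta)/\sqrt n$. Then present your Bernstein argument as one sufficient condition for it.

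When you carry this out, a few smaller points need attention.
The design variance proxy is the finite-population norm $N^{-1}\sum_{i=1}^N\{g(x_i)-g_0(x_i)\}^2$, not $\|g-g_0\|_{L^2}^2$. You therefore need a uniform comparison of the two over $\mathcal{B}_\delta$, which you can get from the i.i.d.\ part applied to the bounded class $\{(g-g_0)^2\}$.

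There are also two bookkeeping slips:
\begin{itemize}
\item If your $\phi_n(\delta)$ already contains the factor $n^{-1/2}$, the rate equation is $\phi_n(\delta)\le\delta^2$, not $\phi_n(\delta)\le\sqrt n\,\delta^2$. As written, it gives a rate too fast by a factor $\sqrt n$.
\item With $s\asymp n\gamma_n^2\log n$ and $K\asymp\log n$, the entropy is of order $n\gamma_n^2\log^3 n$, not $n\gamma_n^2\log^2 n$. This gives $\gamma_n\log^{3/2}n$, still within the claimed rate.
\end{itemize}

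Finally, Step~1 needs no covariate-density bound, and none is among the listed assumptions. The paper's $\|\cdot\|_{L^2}$ is the $L^2(P_X)$ norm.
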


\begin{theorem}[Consistency and convergence rate of $\hat{\mu}_\mathrm{DIPW}$ and $\hat{\mu}_\mathrm{DDR}$]
	\label{thm:Consistency and convergence rate of dnn}
	Under assumptions \ref{ass:A1}--\ref{ass:A3}, \ref{ass:B1}--\ref{ass:B2}, and regularity conditions \ref{cond:C1}--\ref{cond:C9}, including correct specification of the nonparametric sampling-score function $g_0$, we have
	\[
	|\hat{\mu}_\mathrm{DIPW}-\mu_y|
	=
	O_p(\gamma_n\log^2 n),
	\qquad
	|\hat{\mu}_\mathrm{DDR}-\mu_y|
	=
	O_p(\gamma_n\log^2 n).
	\]
\end{theorem}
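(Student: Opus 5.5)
We derive the rate for $\hat{\mu}_\mathrm{DIPW}$ and $\hat{\mu}_\mathrm{DDR}$ from Theorem~\ref{thm:g-rate} by a ratio-type decomposition, taking the unknown $g_0$ as correctly specified. We use Hájek forms. Write
\[
\hat{\mu}_\mathrm{DIPW}=\frac{N^{-1}\sum_{i}R_i y_i/\hat{\pi}_i^A}{N^{-1}\sum_i R_i/\hat{\pi}_i^A}
\]
and compare it with the oracle version $\tilde{\mu}$ that uses the true $\pi_i^A=\pi^A(x_i)$. We split
\[
\hat{\mu}_\mathrm{DIPW}-\mu_y=(\hat{\mu}_\mathrm{DIPW}-\tilde{\mu})+(\tilde{\mu}-\mu_y).
\]

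\textbf{Oracle term.} Under the superpopulation and Poisson-type selection assumptions (A1--A3), $E_q(R_i\mid x_i,y_i)=\pi_i^A$. Condition C1 keeps $\pi_i^A$ bounded away from zero, with $n_A/N$ bounded below. Hence $N^{-1}\sum_i R_i y_i/\pi_i^A-\mu_y$ and $N^{-1}\sum_i R_i/\pi_i^A-1$ have mean zero and variance $O(N^{-1})$ under the finite-moment conditions. So $\tilde{\mu}-\mu_y=O_p(N^{-1/2})$, which is $o(\gamma_n\log^2 n)$ because $\gamma_n\gtrsim n^{-1/2}$.

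\textbf{Estimation term.} Both numerator and denominator change by sums of the form $N^{-1}\sum_i R_i h(x_i,y_i)\{1/\hat{\pi}_i^A-1/\pi_i^A\}$ with $h\in\{y,1\}$. Since $1/\pi=1+e^{-g}$, we have
\[
1/\hat{\pi}-1/\pi=e^{-g_0}(e^{-(\hat g-g_0)}-1).
\]
Because $\hat g,g_0$ are uniformly bounded by $D$ (class $\mathcal{G}_D$ and the Hölder radius $M$), the mean value theorem gives $|1/\hat{\pi}_i^A-1/\pi_i^A|\le C|\hat g(x_i)-g_0(x_i)|$. Cauchy--Schwarz then bounds each sum by
\[
C\,\{N^{-1}\sum_i h_i^2\}^{1/2}\{N^{-1}\sum_i(\hat g(x_i)-g_0(x_i))^2\}^{1/2}.
\]
The first factor is $O_p(1)$ by a moment condition on $y$.

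\textbf{Main obstacle.} The key step is passing from the empirical norm to the population norm $\|\hat g-g_0\|_{L^2}$, since $\hat g$ depends on the data. I would use a uniform law over the sparse ReLU class: $\sup_{g\in\mathcal{G}_D}|(\mathbb{P}_n-\mathbb{P})(g-g_0)^2|$ is bounded by $D^2$ times an entropy integral. The class has metric entropy $\lesssim s\log n$, which is of order $n\gamma_n^2\log n$ under condition C8. This gives $\|\hat g-g_0\|_{n}^2\lesssim\|\hat g-g_0\|_{L^2}^2+O_p(\gamma_n^2\log^2 n)$. Combining with Theorem~\ref{thm:g-rate} yields $O_p(\gamma_n\log^2 n)$ for the empirical norm. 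Alternatively, I would bound the expectation of the difference directly, conditioning on the fitted network and arguing as in the proof of Theorem~\ref{thm:g-rate}.

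\textbf{Linearization.} The denominator converges to $1$ in probability. A standard ratio linearization, $a/b-c/d=(a-c)/b-c(b-d)/(bd)$, then gives $\hat{\mu}_\mathrm{DIPW}-\tilde{\mu}=O_p(\gamma_n\log^2 n)$, which proves the DIPW claim.

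\textbf{DDR estimator.} Write
\[
\hat{\mu}_\mathrm{DDR}=\hat{\mu}_\mathrm{DIPW}-\frac{1}{\hat N^A}\sum_{S_A}\frac{m(x_i,\hat\beta)}{\hat\pi_i^A}+\frac{1}{\hat N^B}\sum_{S_B}d_i^B m(x_i,\hat\beta).
\]
By C9, $\hat\beta\to\beta^*$, possibly a misspecified limit, and $m(\cdot,\beta)$ is Lipschitz in $\beta$ with an integrable envelope. We replace $\hat\beta$ by $\beta^*$ at cost $O_p(\|\hat\beta-\beta^*\|)=O_p(n^{-1/2})$. The same argument as for DIPW, applied with $h=m(x,\beta^*)$, shows the second term equals $N^{-1}\sum_i m(x_i,\beta^*)+O_p(\gamma_n\log^2 n)$. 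The design-based Hájek estimator in the third term equals the same population mean up to $O_p(n_B^{-1/2})$ under assumptions B1--B2. These two terms cancel, and the DDR rate follows from the DIPW rate.
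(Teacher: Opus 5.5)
Your argument follows the paper's proof in its essentials. For DIPW you split into an oracle term plus a perturbation, bound $|1/\hat\pi_i^A-1/\pi_i^A|\le C|\hat g(x_i)-g_0(x_i)|$, and apply Cauchy--Schwarz and Theorem~\ref{thm:g-rate}. For DDR you replace $\hat\beta$ by $\beta^*$ and show that the inverse-weighted and H\'ajek estimates of $N^{-1}\sum_i m(x_i,\beta^*)$ cancel up to $O_p(\gamma_n\log^2 n)$. The paper's Taylor expansion of the estimating function is your ratio linearization in another form: its derivative $-(\hat N^A)^{-1}\sum_{i\in S_A}1/\hat\pi_i^A$ is identically $-1$.

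Two of your choices are cleaner than the paper's. You obtain the inverse-weight bound from $\|\hat g\|_\infty\le D$ and $\|g_0\|_\infty\le M$. The paper instead invokes truncation (C9), which with $\epsilon_n\to0$ only gives $1/\hat\pi_i^A\le 1/\epsilon_n$, not a constant. Your crude replacement of $\hat\beta$ by $\beta^*$ costs $O_p(n^{-1/2})=o(\gamma_n\log^2 n)$, so the paper's derivative computation and second-order expansion (C3) are unnecessary for this rate.

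The step that does not work as written is the passage from the empirical norm to $\|\hat g-g_0\|_{L^2}$. The paper simply asserts this ``by Theorem 1''. A global uniform law of the kind you describe bounds $\sup_{g\in\mathcal{G}_D}|(\mathbb{P}_n-\mathbb{P})(g-g_0)^2|$ by roughly $D^2\sqrt{H/n}$, where $H$ is the entropy. With your count $H\lesssim s\log n$ and $s\asymp n\gamma_n^2\log n$, this is of order $\gamma_n$ up to logarithms, not $\gamma_n^2\log^2 n$. It therefore only gives $\|\hat g-g_0\|_n=O_p(\gamma_n^{1/2})$ up to logarithms, which is too slow.

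You need a localized bound that uses $\mathrm{Var}\{(g-g_0)^2(X)\}\le 4D^2\|g-g_0\|_{L^2}^2$. Apply Bernstein's inequality with a union bound over a sup-norm $1/n$-cover of $\mathcal{G}_D$. Its log-cardinality is $H\lesssim s\log(nU)=O(n\gamma_n^2\log^3 n)$, because $\log U\lesssim K\log n$. This gives, uniformly over $g\in\mathcal{G}_D$,
\[
\|g-g_0\|_n^2\le 2\|g-g_0\|_{L^2}^2+O_p(\gamma_n^2\log^3 n),
\]
hence $\|\hat g-g_0\|_n=O_p(\gamma_n\log^2 n)$. Peeling or local Rademacher arguments give the same bound. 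Apply it to the empirical norm over all $N$ population units, which are i.i.d.\ under the superpopulation; this is exactly the norm your Cauchy--Schwarz step produces after dropping $R_i\le 1$. Uniformity over the class handles the dependence of $\hat g$ on the data. Your alternative of ``conditioning on the fitted network'' does not work, because $\hat g$ is a function of the same $x_i$.

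Several citations need fixing:
\begin{itemize}
\item The sparsity bound comes from B1, not C8.
\item The lower bound on $\pi_i^A$ comes from C5 (or from $\|g_0\|_\infty\le M$), not from C1 alone.
\item The $O_p(n_B^{-1/2})$ H\'ajek bound comes from C4 with C1, not from B1--B2.
\item $\hat\beta-\beta^*=O_p(n^{-1/2})$ is not among C1--C9. The paper adds it as an explicit extra assumption inside its proof, and you should state it that way rather than cite C9.
\item Your Lipschitz envelope from C2 must hold near $\beta^*$, not $\beta_0$, when the outcome model is misspecified. The paper glosses over the same point.
\end{itemize}
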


Under the composite smoothness condition $g_0\in\mathcal{H}(q,\bm{\gamma},\mathbf{d},\tilde{\mathbf{d}},M)$ in \eqref{eq:Csfclass}, \autoref{thm:g-rate} shows that the convergence rate of $\hat{g}$ is determined by the smoothness vector $\bm{\gamma}$ and the intrinsic dimension vector $\tilde{\mathbf{d}}$, rather than by the full ambient dimension alone. Consequently, the proposed DNN-based estimators can alleviate the curse of dimensionality and achieve faster convergence than conventional nonparametric estimators when the intrinsic dimension of the target function is relatively low.

%%%%%%%%%%%%%%%%%%
\section{Simulation studies}
\label{sec:Simulation studies}

To evaluate the proposed DNN-based methods for estimating a finite population mean, we adopt a simulation design motivated by \cite{aut2020}. 
We consider a finite population of size $N=20000$, in which the response variable $y$ and auxiliary variables $x$ follow the regression model
\begin{equation*}
	y_{i}=2 + x_{1i} + x_{2i} + x_{3i} + x_{4i} + \sigma \varepsilon_{i}, \quad i = 1,\ldots,N,  
\end{equation*}
where $x_{1i} = z_{1i}$, $x_{2i} = z_{2i} + 0.3x_{1i}$, $x_{3i} = z_{3i} + 0.2(x_{1i} + x_{2i})$, and $x_{4i} = z_{4i} + 0.1(x_{1i} + x_{2i} + x_{3i})$, with $z_{1i} \sim \mathrm{Bernoulli}(0.5)$, $z_{2i} \sim \mathrm{Uniform}(0,2)$, $z_{3i} \sim \mathrm{Exponential}(1)$, and $z_{4i} \sim\chi^{2}(4)$. The error terms $\varepsilon_{i}$ are independent and identically distributed as $N(0,1)$. The value of $\sigma$ is chosen by controlling the correlation coefficient $\rho$ between $y$ and $x^{\top}\beta$ at $0.3$, $0.5$, and $0.8$. 
The true propensity score $\pi_{i}^{A}$ for the nonprobability sample $S_{A}$ follows the logistic model
\begin{align*}
	\log\left(\frac{\pi_{i}^{A}}{1 - \pi_{i}^{A}}\right)=\ &\theta_{0} + 0.05x_{1i}x_{2i} + 0.1x_{2i}^2 + 0.05x_{3i}x_{4i} \\
	&+ 0.08\sin(0.3x_{3i}) + 0.05\ln(1+x_{2i}+x_{4i}),  
\end{align*}
which differs from the parametric linear propensity-score specification considered in \cite{aut2020}, since it is a nonlinear function of $x_i=(x_{1i},x_{2i},x_{3i},x_{4i})^\top$. Here, $\theta_{0}$ is chosen such that $\sum_{i = 1}^{N}\pi_{i}^{A} = n_{A}$ for a given target sample size $n_{A}$. 

We consider two scenarios for model specification. 
(i) The outcome regression model is correctly specified but the parametric propensity score model is misspecified; this scenario is denoted by \enquote{TF}. The fitted logistic regression model for the propensity score is misspecified as a linear model of the auxiliary variables,
\[
\log\{\pi_{i}^{A}/(1 -\pi_{i}^{A})\}=\theta_{0} +\theta_1 x_{1i} + \theta_2 x_{2i} + \theta_3x_{3i} + \theta_4 x_{4i}.
\]
(ii) Both the outcome regression model and the parametric propensity score model are misspecified; this scenario is denoted by \enquote{FF}. The outcome regression model is misspecified as
\[
y_{i}=\beta_0 + \beta_1x_{1i} + \beta_2x_{2i} + \beta_3x_{3i},
\]
with $x_{4i}$ omitted from the model. The fitted logistic regression model for the propensity score is the same as in scenario (i). For a given estimator $\hat{\mu}$, its performance is evaluated by the relative bias and mean squared error, computed as 
\begin{equation*}
	\%RB=\frac{1}{B}\sum_{b=1}^B\frac{\hat{\mu}^{(b)}-\mu_y}{\mu_y} \times 100, \qquad  
	MSE=\frac{1}{B}\sum_{b=1}^B(\hat{\mu}^{(b)}-\mu_y)^2, 
\end{equation*}
where $B$ is the number of simulation runs. In this study, $B=500$. 

\begin{table}[h]
	\caption{Simulated \%RB and MSE of the estimators of $\mu_y$ ($n_A = 500$, $n_B = 1000$)}
	\label{tab:sr}
	\centering
	\begin{tabular}{l l ccc ccc ccc }
		\hline
		\multirow{2}{*}{Models} & \multirow{2}{*}{Estimator} & \multicolumn{3}{c }{$\rho = 0.30$} & \multicolumn{3}{c }{$\rho = 0.50$} & \multicolumn{3}{c }{$\rho = 0.80$} \\
		\cline{3-11}
		& & \%RB & MSE & & \%RB & MSE & & \%RB & MSE & \\
		\hline
		\multirow{6}{*}{TF} 
		& $\hat{\mu}_A$ & 72.89 & 46.31 & & 74.67 & 47.97 & & 75.89 & 49.15 & \\
		& $\hat{\mu}_{\text{REG}}$ & -1.21 & 0.81 & & -0.67 & 0.25 & & -0.32 & 0.06 & \\
		& $\hat{\mu}_{\text{IPW}}$ & -5.77 & 18.35 & & -5.87 & 7.47 & & -5.92 & 3.80 & \\
		& $\hat{\mu}_{\text{DR}}$ & 0.97 & 13.82 & & 0.73 & 4.06 & & 0.26 & 0.81 & \\
		& $\hat{\mu}_{\text{DIPW}}$ & 1.24 & 2.90 & & 2.10 & 1.70 & & 2.76 & 1.37 & \\
		& $\hat{\mu}_{\text{DDR}}$ & -0.90 & 2.27 & & -0.46 & 0.62 & & -0.22 & 0.15 & \\
		\hline
		\multirow{6}{*}{FF} 
		& $\hat{\mu}_A$ & 72.86 & 46.28 & & 74.66 & 47.97 & & 75.89 & 49.15 & \\
		& $\hat{\mu}_{\text{REG}}$ & 77.18 & 51.96 & & 79.06 & 53.81 & & 80.35 & 55.12 & \\
		& $\hat{\mu}_{\text{IPW}}$ & -6.32 & 17.79 & & -5.80 & 7.48 & & -5.92 & 3.80 & \\ 
		& $\hat{\mu}_{\text{DR}}$ & -23.75 & 18.82 & & -24.21 & 10.07 & & -24.84 & 7.05 & \\
		& $\hat{\mu}_{\text{DIPW}}$ & 2.00 & 3.00 & & 2.77 & 1.77 & & 2.88 & 1.31 & \\
		& $\hat{\mu}_{\text{DDR}}$ & -0.21 & 2.68 & & -0.26 & 1.49 & & 0.17 & 0.88 & \\
		\hline
	\end{tabular}
\end{table}

The simulation results for $n_A=500$ and $n_B=1000$ are reported in \autoref{tab:sr}. Major observations from \autoref{tab:sr} can be summarized as follows. 

First, under the correctly specified outcome regression model and misspecified parametric propensity score model (\enquote{TF}), the naive estimator $\hat{\mu}_A=\bar{y}$ exhibits substantial positive bias and high MSE across all values of $\rho$. The regression estimator $\hat{\mu}_{\mathrm{REG}}$ performs well under this setting, as expected, because the outcome regression model is correctly specified. In contrast, $\hat{\mu}_{\mathrm{IPW}}$ suffers from negative bias and relatively large MSE because it relies exclusively on the misspecified parametric propensity score model. The estimator $\hat{\mu}_{\mathrm{DR}}$ improves upon $\hat{\mu}_{\mathrm{IPW}}$, reflecting the benefit of incorporating the correctly specified outcome model. The proposed $\hat{\mu}_{\mathrm{DIPW}}$ estimator improves substantially over $\hat{\mu}_{\mathrm{IPW}}$, indicating that the DNN can better approximate the nonlinear propensity score function. The proposed $\hat{\mu}_{\mathrm{DDR}}$ estimator further combines the DNN-estimated sampling scores with the outcome regression model and achieves small relative bias and low MSE in all three correlation settings.

Second, under the misspecified outcome regression model and misspecified parametric propensity score model (\enquote{FF}), the performance of both $\hat{\mu}_{\mathrm{REG}}$ and $\hat{\mu}_{\mathrm{IPW}}$ deteriorates, with substantial bias and increased MSE. The conventional doubly robust estimator $\hat{\mu}_{\mathrm{DR}}$ also performs poorly in this setting, since the standard double robustness property no longer protects against bias when both working models are misspecified. The performance of $\hat{\mu}_{\mathrm{IPW}}$ remains similar across the TF and FF scenarios because the same misspecified parametric propensity score model is used in both cases. In contrast, $\hat{\mu}_{\mathrm{DIPW}}$ and $\hat{\mu}_{\mathrm{DDR}}$ continue to perform well. In particular, $\hat{\mu}_{\mathrm{DDR}}$ has the smallest relative bias in the FF scenario across all three values of $\rho$. This improvement can be attributed to the flexibility of the DNN in approximating the nonlinear participation mechanism, thereby reducing the impact of parametric propensity-score misspecification. These results should be interpreted as finite-sample evidence under the simulation design considered here, rather than as a general guarantee of consistency under arbitrary simultaneous misspecification of both nuisance models.

\autoref{fig:simulation-results} provides a visualization of the distributions of the estimation biases under the scenarios in \autoref{tab:sr} using boxplots. 
\begin{center}
         \includegraphics[scale=0.6]{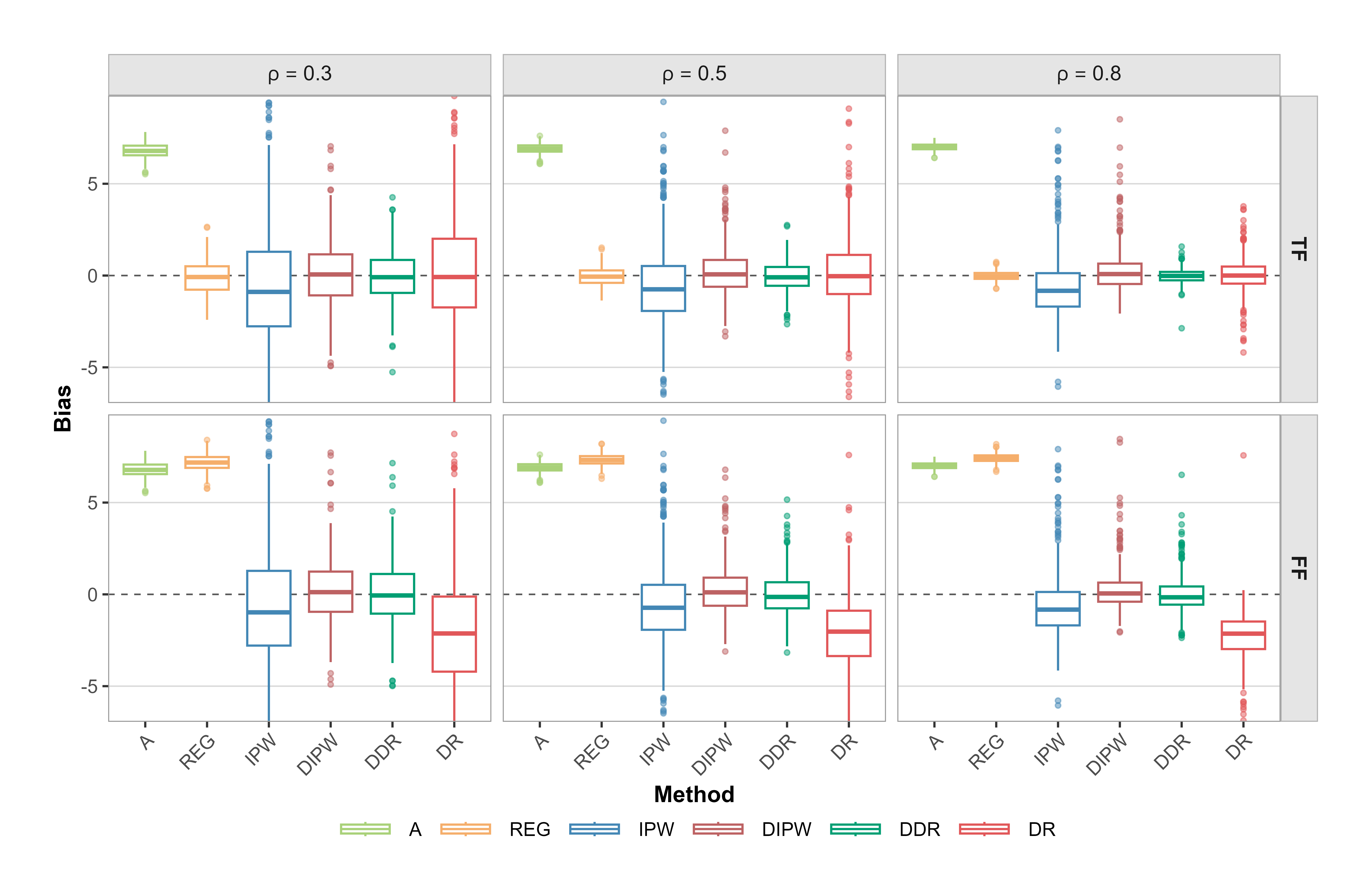} %{PLOT-2026May19-test.pdf}
	\captionof{figure}{Boxplots of the estimation biases under TF and FF scenarios when $\rho=0.3,0.5,0.8$}
	\label{fig:simulation-results} 
\end{center}

In summary, the proposed $\hat{\mu}_{\mathrm{DDR}}$ estimator exhibits favorable finite-sample performance under both the TF scenario, where the outcome model is correctly specified and the parametric propensity score model is misspecified, and the more challenging FF scenario, where both the outcome model and the parametric propensity score model are misspecified. Conventional parametric estimators, including $\hat{\mu}_{\mathrm{REG}}$, $\hat{\mu}_{\mathrm{IPW}}$, and $\hat{\mu}_{\mathrm{DR}}$, can suffer substantial performance loss under the FF scenario. By replacing the parametric propensity score model with a flexible DNN-based estimator, $\hat{\mu}_{\mathrm{DDR}}$ provides additional protection against propensity-score functional-form misspecification while retaining the structure of doubly robust estimation.

\section{Application}
\label{sec:Application}

In this section, we illustrate the proposed method using a dataset collected by the Pew Research Center (PRC) in 2015 (\href{http://www.pewresearch.org}{\textit{http://www.pewresearch.org}}). This dataset comprises nine nonprobability samples obtained from eight vendors, each using distinct and largely undocumented strategies for panel recruitment, sampling, participant incentives, and related procedures. For analytical purposes, we aggregate these samples into a single nonprobability sample with total sample size $n_A=9301$, hereafter referred to as the PRC dataset. To provide auxiliary population-level information, we use a reference probability sample from the 2015 Behavioral Risk Factor Surveillance System (BRFSS) survey (\href{https://www.cdc.gov/brfss/index.html}{\textit{https://www.cdc.gov/brfss/index.html}}), which contains 441,456 respondents.
These datasets were used by \cite{aut2020} to develop doubly robust estimators for integrating probability and nonprobability survey samples. They considered three datasets; here, we focus on the PRC and BRFSS datasets. \autoref{tab:dd} reports the marginal distributions of selected common variables from the three datasets, where $\hat{\mu}_{\text{PRC}}$ and $\hat{\mu}_{\text{BRFSS}}$ denote the corresponding survey-weighted estimates of population means.

\begin{table}[h]
	\caption{Marginal distributions of common covariates from the two samples}
	\label{tab:dd}
	\centering
	\begin{tabular}{llrr}
		\toprule
		& & $\hat{\mu}_{\text{PRC}}$ & $\hat{\mu}_{\text{BRFSS}}$ \\
		\midrule
		\multirow{4}{*}{Age category} & $<$30         & 18.29 & 20.91 \\
		& $\geq$30, $<$50 & 32.60 & 33.28 \\
		& $\geq$50, $<$70 & 38.68 & 32.69 \\
		& $\geq$70        & 10.43 & 13.12 \\
		\midrule
		Gender & Female                      & 54.36 & 51.32 \\
		Race   & White only                  & 82.28 & 75.05 \\
		Race   & Black only                  & 8.83  & 12.59 \\
		Origin & Hispanic/Latino            & 9.27  & 16.52 \\
		Region & Northeast                 & 19.96 & 17.70 \\
		Region & South                     & 33.26 & 38.27 \\
		Region & West                      & 24.09 & 23.18 \\
		Marital status & Married           & 50.35 & 50.82 \\
		Employment & Working         & 52.13 & 56.63 \\
		Employment & Retired           & 24.34 & 17.93 \\
		Education & High school or less    & 21.63 & 42.66 \\
		Education & Bachelor's degree and above & 41.57 & 26.32 \\
	\bottomrule
	\end{tabular}
\end{table}

To illustrate the proposed estimators and compare them with the estimators of \cite{aut2020}, including 
$\hat{\mu}_{\mathrm{REG}}$, $\hat{\mu}_{\mathrm{IPW}}$, and $\hat{\mu}_{\mathrm{DR}}$, we estimate the population means of seven outcome variables. The first six outcome variables are binary, while the last one is continuous. \autoref{tab:epm} presents the estimation results based on the set of covariates that are available in \autoref{tab:dd}. The covariates used in the analysis include age, gender, race, Hispanic origin, region, marital status, employment status, and education level, categorized as high school or less versus bachelor's degree and above. The variable \enquote{age} is coded as 1, 2, 3, and 4 for the four age categories and treated as a continuous variable in the analysis. The estimator $\hat{\mu}_{A}$ denotes the unadjusted simple sample mean from the nonprobability PRC sample. 

\autoref{tab:epm} shows that the adjusted estimators generally differ from the unadjusted nonprobability sample mean $\hat{\mu}_{A}$, reflecting the effect of using auxiliary information from the reference probability sample. The proposed DNN-based estimators $\hat{\mu}_{\mathrm{DIPW}}$ and $\hat{\mu}_{\mathrm{DDR}}$ produce estimates that are broadly comparable to those from $\hat{\mu}_{\mathrm{REG}}$, $\hat{\mu}_{\mathrm{IPW}}$, and $\hat{\mu}_{\mathrm{DR}}$ for several outcomes, although some differences are observed, particularly for the outcomes related to trust in neighbors, local election voting, and days with at least one drink last month. Unlike the simulation study, the true population means and the true model specifications are unknown in this empirical application. Therefore, the application should be interpreted as an illustration of implementation and comparison across estimators, rather than as a definitive assessment of estimator accuracy.

\begin{table}[h]
	\caption{Estimated population mean of $y$ using a set of common covariates}
	\label{tab:epm}
	\centering
	\begin{tabular}{p{4.5cm} r r r r r r}
		\toprule
		Response variable $y$ & $\hat{\mu}_A$ & $\hat{\mu}_{REG}$ & $\hat{\mu}_{IPW}$ & $\hat{\mu}_{DR}$ & $\hat{\mu}_{DIPW}$ & $\hat{\mu}_{DDR}$ \\
		\midrule
		\makecell[l]{\%Talked with \\ neighbors frequently} & 46.13 & 45.60 & 45.48 & 45.63 & 46.12 & 44.83\\
		\addlinespace[0.5pt] 
		\cmidrule(r){1-7} 
		\addlinespace[0.5pt]
		\makecell[l]{\%Tended to trust \\ neighbors} & 58.97 & 54.67 & 54.93 & 54.68 & 51.71 & 52.07 \\
		\addlinespace[0.5pt] 
		\cmidrule(r){1-7} 
		\addlinespace[0.5pt]
		\makecell[l]{\%Expressed opinions \\ at a government level} & 26.54 & 23.75 & 23.89 & 23.88 & 26.78 & 22.34 \\
		\addlinespace[0.5pt] 
		\cmidrule(r){1-7} 
		\addlinespace[0.5pt]
		\makecell[l]{\%Voted local  elections} & 74.97 & 70.02 & 70.32 & 70.11 & 70.98 & 70.05 \\
		\addlinespace[0.5pt] 
		\cmidrule(r){1-7} 
		\addlinespace[0.5pt]
		\makecell[l]{\%Participated in \\ school groups}  & 20.97 & 20.28 & 20.30 & 20.30 & 17.12 & 19.90 \\
		\addlinespace[0.5pt] 
		\cmidrule(r){1-7} 
		\addlinespace[0.5pt]
		\makecell[l]{\%Participated in \\ service organizations}  & 14.11 & 13.36 & 13.11 & 13.27 & 12.45 & 12.08 \\
		\addlinespace[0.5pt] 
		\cmidrule(r){1-7} 
		\addlinespace[0.5pt]
		\makecell[l]{Days had at least \\ one drink last month}  & 5.30  & 4.86 & 4.87  & 4.89 & 5.33  & 4.37 \\
		\bottomrule
	\end{tabular}
\end{table}

\section{Concluding remarks and discussion}
\label{sec:Conclusion}

In this article, we propose a deep doubly robust estimation framework for integrating probability and nonprobability survey samples. The proposed method uses deep neural networks to estimate the sampling propensity score, thereby replacing a restrictive parametric propensity score model with a flexible data-adaptive alternative. The DNN parameters are optimized using the ADAM algorithm, allowing the method to accommodate complex nonlinear selection mechanisms that may arise in nonprobability samples. The theoretical results establish consistency and convergence rates for the proposed estimators under regularity conditions. The simulation studies show that the DDR estimator achieves stable finite-sample performance, particularly when the parametric propensity score model is misspecified and the true selection mechanism is nonlinear. The empirical application illustrates how the proposed estimators can be implemented using a nonprobability survey sample together with auxiliary information from a reference probability sample.

The proposed framework provides a useful extension of doubly robust estimation for modern survey settings involving heterogeneous data sources. Its main advantage is that it reduces reliance on a correctly specified parametric propensity score model while preserving the structure of inverse-probability weighted and doubly robust estimation. At the same time, the method does not eliminate the need for standard identification assumptions, such as the availability of sufficiently rich auxiliary variables and positivity of participation probabilities. Moreover, the theoretical guarantees are established under regularity conditions for the DNN-based sampling-score estimator and should not be interpreted as unconditional robustness to arbitrary misspecification of all nuisance components.

In the present work, we use a parametric outcome regression model and a nonparametric DNN-based propensity score model. This choice reflects our focus on the sampling-score estimation problem, where the pseudo-likelihood involves both the nonprobability sample and the reference probability sample and therefore requires additional theoretical treatment. A natural extension is to also model the outcome regression function nonparametrically, for example by using another DNN. Such an extension may further improve performance when both the outcome model and the propensity score model contain complex nonlinear structures, although it would also increase computational cost and require additional theoretical analysis. Developing this fully nonparametric deep doubly robust framework is an important direction for future research.

%%%%%%%%%%%%%%%%%%%

\section*{Data availability}
%  <Insert the statement about the availability or absence of shared data.>
 The PRC data analyzed in this study are publicly available at 
 \href{http://www.pewresearch.org}{\textit{http://www.pewresearch.org}}.
 The BRFSS data are publicly available at  \href{https://www.cdc.gov/brfss/index.html}{\textit{https://www.cdc.gov/brfss/index.html}}. All data were accessed in accordance with the respective data-sharing policies of the providers.

\section*{Funding}
%  <Place your funding information here.>
 This research was supported by grants from the Natural Sciences and
Engineering Research Council of Canada.

%% Appendices
\appendix                       % start of appendices
\renewcommand{\thesubsection}{\thesection\arabic{subsection}}
\section{Appendix}
\label{sec:Appendix}

\renewcommand{\theequation}{A\arabic{equation}} 
\renewcommand{\theHequation}{A\arabic{equation}}
\setcounter{equation}{0} 

In this appendix, we first state the regularity conditions used in the technical arguments. We then provide three auxiliary lemmas and their proofs, followed by the proofs of the two theorems stated in the main text. The theoretical arguments combine a superpopulation representation for the DNN sampling-score model with a sequence of finite populations and probability samples. Specifically, the finite population is viewed as generated from an underlying superpopulation distribution, while conditional on the finite population, the reference probability sample $S_B$ is used to estimate population totals through the probability sampling design. This convention allows us to use empirical-process notation for the DNN estimator and design-consistency arguments for Horvitz--Thompson estimators based on $S_B$.

\subsection{Regularity conditions}

Let $f(y\mid x)$ be the conditional distribution of $y$ given $x$ in the superpopulation model that generates the finite outcome population. Let $m(x,\beta)$ be the mean function of the outcome regression model. Let $\beta_0$ denote the true value of the model parameter when the outcome regression model is correctly specified. When the outcome regression model is misspecified, let $\beta^*$ denote the probability limit of $\hat{\beta}$. The sampling-score function is modeled nonparametrically through $\pi_i^A=\sigma\{g_0(x_i)\}$, where $\sigma(z)=\{1+\exp(-z)\}^{-1}$ and $g_0$ is the true logit sampling-score function.

\begin{assumptionA}
	\label{ass:A1}
	The selection indicator $R_i$ and the response variable $y_i$ are independent given the set of covariates $x_i$, i.e.,
	\[
	\pi_i^A=P(R_i=1\mid x_i,y_i)=P(R_i=1\mid x_i),
	\]
	for all $i$. The quantity $\pi_i^A$ is referred to as the sampling score or participation probability.
\end{assumptionA}

\begin{assumptionA}
	\label{ass:A2}
	All units have a nonzero propensity score, that is, $\pi_i^A>0$ for all $i$.
\end{assumptionA}

\begin{assumptionA}
	\label{ass:A3}
	The indicator variables $R_i$ and $R_j$ are independent given $x_i$ and $x_j$ for $i\neq j$.
\end{assumptionA}

\begin{assumptionB}
	\label{ass:B1}
	The DNN architecture satisfies
	\[
	K=O(\log n), \qquad s=O(n\gamma_n^2\log n),
	\]
	and
	\[
	n\gamma_n^2 \lesssim \min_{k=1,\ldots,K}p_k
	\leq \max_{k=1,\ldots,K}p_k \lesssim n,
	\]
	where $K$, $s$, and $\mathbf{p}$ are defined in \autoref{eq:Classsnn}.
\end{assumptionB}

\begin{assumptionB}
	\label{ass:B2}
	The nonparametric logit sampling-score function $g_0$ is an element of the composite smoothness class
	$\mathcal{H}(q,\bm{\gamma},\mathbf{d},\tilde{\mathbf{d}},M)$.
\end{assumptionB}

\begin{conditionC}
	\label{cond:C1}
	The population size $N$ and the sample sizes $n_A$ and $n_B$ satisfy
	\[
	\lim_{N\to\infty}\frac{n_A}{N}=f_A\in(0,1),
	\qquad
	\lim_{N\to\infty}\frac{n_B}{N}=f_B\in(0,1).
	\]
\end{conditionC}

\begin{conditionC}
	\label{cond:C2}
	For each $x$, $\partial m(x,\beta)/\partial\beta$ is continuous in $\beta$ and
	\[
	\left\|\frac{\partial m(x,\beta)}{\partial\beta}\right\|\le h(x,\beta)
	\]
	for $\beta$ in a neighborhood of $\beta_0$, and
	\[
	N^{-1}\sum_{i=1}^N h(x_i,\beta_0)=O(1).
	\]
\end{conditionC}

\begin{conditionC}
	\label{cond:C3}
	For each $x$, $\partial^2m(x,\beta)/\partial\beta\partial\beta^\top$ is continuous in $\beta$ and
	\[
	\max_{j,l}\left|
	\frac{\partial^2m(x,\beta)}
	{\partial\beta_j\partial\beta_l}
	\right|
	\le k(x,\beta)
	\]
	for $\beta$ in a neighborhood of $\beta_0$, and
	\[
	N^{-1}\sum_{i=1}^N k(x_i,\beta_0)=O(1).
	\]
\end{conditionC}

\begin{conditionC}
	\label{cond:C4}
	The finite population and the sampling design for $S_B$ satisfy
	\[
	N^{-1}\sum_{i\in S_B}d_i^B u_i
	-
	N^{-1}\sum_{i=1}^N u_i
	=
	O_p(n_B^{-1/2}),
	\]
	for $u_i=x_i$, $y_i$, $m(x_i,\beta)$, and $\dot m(x_i,\beta)=\partial m(x_i,\beta)/\partial\beta$, whenever these quantities are used in the expansion.
\end{conditionC}

\begin{conditionC}
	\label{cond:C5}
	There exist constants $0<c_1<c_2<\infty$ such that
	\[
	0<c_1\leq \frac{N\pi_i^A}{n_A}\leq c_2,
	\qquad
	0<c_1\leq \frac{N\pi_i^B}{n_B}\leq c_2
	\]
	for all units $i$.
\end{conditionC}

\begin{conditionC}
	\label{cond:C6}
	The finite population and the propensity scores satisfy
	\[
	N^{-1}\sum_{i=1}^N y_i^2=O(1),
	\qquad
	N^{-1}\sum_{i=1}^N \|x_i\|^3=O(1),
	\]
	and
	\[
	N^{-1}\sum_{i=1}^N \pi_i^A(1-\pi_i^A)x_ix_i^\top
	\]
	is positive definite.
\end{conditionC}

\begin{conditionC}
	\label{cond:C7}
	The logistic link function $\sigma(z)=\{1+\exp(-z)\}^{-1}$ has globally bounded first and second derivatives. That is, there exist constants $L_\sigma>0$ and $M_\sigma>0$ such that
	\[
	|\sigma'(z)|\leq L_\sigma,
	\qquad
	|\sigma''(z)|\leq M_\sigma,
	\]
	for all $z\in\mathbb{R}$.
\end{conditionC}

\begin{conditionC}
	\label{cond:C8}
	For any fixed $D>0$, the probability sampling design for $S_B$ satisfies the following uniform design-consistency condition over the bounded sparse DNN class:
	\[
	\sup_{g\in\mathcal{G}(K,s,\mathbf{p},D)}
	\left|
	N^{-1}\sum_{i\in S_B}d_i^B\log\{1+\exp(g(x_i))\}
	-
	N^{-1}\sum_{i=1}^N\log\{1+\exp(g(x_i))\}
	\right|
	=
	O_p(N^{-1/2}).
	\]
	This condition strengthens Condition \ref{cond:C4} from fixed functions to the DNN-indexed class needed for the pseudo-likelihood argument.
\end{conditionC}

\begin{conditionC}
	\label{cond:C9}
	The estimated sampling scores used in the DIPW and DDR estimators are truncated away from zero and one. Specifically, for a sequence $\epsilon_n>0$ satisfying $\epsilon_n\to 0$ sufficiently slowly, define
	\[
	\hat{\pi}_{i,\epsilon}^A
	=
	\min\{1-\epsilon_n,\max(\epsilon_n,\hat{\pi}_i^A)\}.
	\]
	In the proofs, $\hat{\pi}_i^A$ denotes this truncated version.
\end{conditionC}

Assumptions \ref{ass:A1}--\ref{ass:A3} are standard assumptions for inference with nonprobability samples. Assumption \ref{ass:A1} is the ignorability assumption and corresponds to missing at random as defined by \cite{aut1976} and Little and \cite{aut2019}. Under Assumption \ref{ass:A1}, the outcome mean function satisfies $E(y\mid x)=E(y\mid x,R=1)$ and can therefore be estimated from the nonprobability sample. Assumption \ref{ass:A2} rules out zero participation probabilities. Assumptions \ref{ass:A1} and \ref{ass:A2} together form the strong ignorability condition of \cite{aut1983}. Assumption \ref{ass:A3} rules out residual dependence among sample membership indicators after conditioning on the covariates.

Assumptions \ref{ass:B1}--\ref{ass:B2} control the complexity of the DNN estimator. Assumption \ref{ass:B1} balances approximation error and estimation error through the depth, width, and sparsity level of the neural network. Assumption \ref{ass:B2} restricts the true logit sampling-score function $g_0$ to a composite H\"older class, under which sparse ReLU networks can approximate $g_0$ at the rate used in the main theorem.

Conditions \ref{cond:C1}--\ref{cond:C6} follow the standard two-sample asymptotic framework for integrating nonprobability and probability samples. Condition \ref{cond:C1} implies that $O_p(n_A^{-1/2})$, $O_p(n_B^{-1/2})$, and $O_p(N^{-1/2})$ have the same order. Conditions \ref{cond:C2} and \ref{cond:C3} are smoothness and boundedness conditions for Taylor expansions involving the outcome model. Condition \ref{cond:C4} is a design-consistency condition for fixed population quantities. Condition \ref{cond:C5} requires that the participation probabilities and probability-sample inclusion probabilities are of the same order as those under simple random sampling. Condition \ref{cond:C6} imposes finite moment and nonsingularity requirements. Condition \ref{cond:C7} is used to transfer the convergence rate of $\hat g$ to the estimated sampling scores $\hat{\pi}_i^A=\sigma\{\hat g(x_i)\}$. Condition \ref{cond:C8} is needed because the pseudo-log-likelihood replaces an unavailable population total by a Horvitz--Thompson estimator uniformly over a DNN-indexed class. Condition \ref{cond:C9} ensures that inverse estimated sampling scores are well behaved in the DIPW and DDR estimators.

\subsection{Lemma 1 and proof}

\begin{lemma}
	\label{lem:Lemma1}
	Define
	\[
	\mathcal{F}_1=\{Rg(x): g\in\mathcal{G}(K,s,\mathbf{p},D)\}
	\]
	and
	\[
	\mathcal{F}_2=
	\{\log(1+\exp\{g(x)\}): g\in\mathcal{G}(K,s,\mathbf{p},D)\}.
	\]
	Then, for any $D>0$, $\mathcal{F}_1$ and $\mathcal{F}_2$ are $P$-Glivenko--Cantelli.
\end{lemma}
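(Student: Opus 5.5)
The plan is to verify a finite-bracketing (or uniform-entropy) condition for each class, together with an integrable envelope. Then the standard Glivenko--Cantelli theorem (e.g., van der Vaart and Wellner, Theorem 2.4.1) gives the conclusion: a class with $N_{[\,]}(\epsilon,\mathcal{F},L^1(P))<\infty$ for every $\epsilon>0$ is $P$-Glivenko--Cantelli.

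\textbf{Envelopes.} Every $g\in\mathcal{G}(K,s,\mathbf{p},D)$ satisfies $\|g\|_\infty\le D$. Since $R\in\{0,1\}$, we get $|Rg(x)|\le D$. Also $0<\log(1+e^{g(x)})\le \log(1+e^{D})\le D+\log 2$. So both classes have constant, hence integrable, envelopes.

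\textbf{Entropy of the DNN class.} Fix $K,s,\mathbf{p}$ and let $V=\sum_{k=0}^K(p_k+1)p_{k+1}$ be the total number of parameters. Each network is determined by at most $s$ nonzero parameters in $[-1,1]$. I would use the known perturbation bound, Lemma 5 of \cite{aut2020b}: if two networks with the same sparsity pattern have parameters differing by at most $\delta$ in sup norm, their outputs differ in $\|\cdot\|_\infty$ on $[0,1]^r$ by at most $\delta(K+1)\prod_{k=0}^{K+1}(p_k+1)$. Choosing a $\delta$-grid on $[-1,1]$ for each of the at most $\binom{V}{s}$ sparsity patterns gives
\[
\log N\bigl(\epsilon,\mathcal{G}(K,s,\mathbf{p},D),\|\cdot\|_\infty\bigr)\le (s+1)\log\Bigl(2\epsilon^{-1}(K+1)V^2\Bigr)<\infty
\]
for every $\epsilon>0$. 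Sup-norm balls of radius $\epsilon$ around the centers $g_j$ yield brackets $[g_j-\epsilon,g_j+\epsilon]$ of $L^1(P)$-size $2\epsilon$. So the bracketing numbers of $\mathcal{G}$ are finite. The truncation $\|g\|_\infty\le D$ does not increase covering numbers beyond a factor of $2$ in radius, since we may take centers outside the subclass and re-center.

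\textbf{Transfer to $\mathcal{F}_1$ and $\mathcal{F}_2$.} For $\mathcal{F}_1$, multiplying by $R\in\{0,1\}$ preserves brackets: $[Rg_j-R\epsilon,Rg_j+R\epsilon]$ contains $Rg$ and has $L^1$-size at most $2\epsilon$. For $\mathcal{F}_2$, the map $u\mapsto\log(1+e^u)$ is monotone and $1$-Lipschitz, because its derivative is the logistic function, which lies in $(0,1)$. Hence the images of the brackets, $[\log(1+e^{g_j-\epsilon}),\log(1+e^{g_j+\epsilon})]$, are brackets of $L^1$-size at most $2\epsilon$. Both classes therefore have finite bracketing numbers for each $\epsilon$, and are $P$-Glivenko--Cantelli.

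\textbf{Main obstacle.} The delicate step is the entropy bound for the sparse ReLU class, namely the parameter-to-function Lipschitz estimate. I would cite it from \cite{aut2020b} rather than reprove it. For fixed $n$ only finiteness matters, so the crude bound above suffices. I would also remark that the measurability required by the Glivenko--Cantelli theorem holds because the class is indexed continuously by a separable parameter set.
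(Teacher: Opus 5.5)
Your proof is correct, and its core is the paper's: you transfer the sparse-ReLU entropy bound from $\mathcal{G}(K,s,\mathbf{p},D)$ to $\mathcal{F}_1$ and $\mathcal{F}_2$ using $R\in\{0,1\}$ and the fact that $u\mapsto\log(1+e^u)$ is $1$-Lipschitz. You differ in the Glivenko--Cantelli theorem used at the end.

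\begin{itemize}
\item The paper shows $\mathcal{N}(\epsilon,\mathcal{F}_i,L^1(P))\le\mathcal{N}\{\epsilon,\mathcal{G}(K,s,\mathbf{p},D),L^1(P)\}$ and invokes the uniform-entropy theorem (Theorem 19.13 of \cite{aut2000}).
\item You start from sup-norm covers and turn them into $L^1(P)$-brackets. You push the brackets through $R\ge 0$ and the monotone map $u\mapsto\log(1+e^u)$, and then invoke the bracketing theorem.
\end{itemize}

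Your route is the tighter of the two. Covering numbers under the single measure $P$ do not by themselves give the Glivenko--Cantelli property. Theorem 19.13 in fact needs a bound uniform over finitely discrete $Q$, taken relative to an envelope, plus a measurability condition; the paper never writes down the envelope. Brackets built from sup-norm covers feed the bracketing theorem directly, and that theorem has no measurability requirement, so your closing remark on measurability is unnecessary. The paper's argument can be completed the same way, because its Lipschitz transfer holds under every $Q$ and the sparse-ReLU entropy bound is available in sup norm.

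Two small corrections.

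\begin{itemize}
\item In your displayed entropy bound, the $V$ of Lemma 5 in \cite{aut2020b} is $\prod_{k=0}^{K+1}(p_k+1)$, which is exactly what your quoted perturbation estimate produces. With $V$ defined as the parameter count, the displayed inequality is not what that lemma gives. This does not affect finiteness, which is all you use.
\item ``For fixed $n$ only finiteness matters'' should read ``for fixed $(K,s,\mathbf{p})$''. The Glivenko--Cantelli property is an $n\to\infty$ statement about one fixed class, and that is how the lemma is stated. If the architecture grows with $n$ as in Assumption~\ref{ass:B1}, neither your argument nor the paper's gives the uniform law used in the proof of Theorem~\ref{thm:g-rate}. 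That would need the explicit entropy bound, e.g.\ $s\log(U/\epsilon)=o(n)$, rather than mere finiteness.
\end{itemize}
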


\begin{proof}[Proof of Lemma 1]
For any $g_1,g_2\in\mathcal{G}(K,s,\mathbf{p},D)$, define $h(z)=\log(1+\exp z)$. Since
\[
h'(z)=\frac{\exp z}{1+\exp z}\in(0,1),
\]
the mean value theorem gives
\[
\left|h\{g_1(x)\}-h\{g_2(x)\}\right|
\leq
|g_1(x)-g_2(x)|.
\]
Therefore,
\[
\mathbb{P}\left|\log(1+\exp\{g_1(x)\})-\log(1+\exp\{g_2(x)\})\right|
\leq
\mathbb{P}|g_1(x)-g_2(x)|.
\]
This implies that
\[
\mathcal{N}(\epsilon,\mathcal{F}_2,L^1(P))
\leq
\mathcal{N}\{\epsilon,\mathcal{G}(K,s,\mathbf{p},D),L^1(P)\}.
\]
Similarly, since $R\in\{0,1\}$,
\[
\mathbb{P}|Rg_1(x)-Rg_2(x)|
\leq
\mathbb{P}|g_1(x)-g_2(x)|,
\]
and hence
\[
\mathcal{N}(\epsilon,\mathcal{F}_1,L^1(P))
\leq
\mathcal{N}\{\epsilon,\mathcal{G}(K,s,\mathbf{p},D),L^1(P)\}.
\]
The result follows from Theorem 19.13 of \cite{aut2000} and the entropy bound for sparse ReLU networks in Lemma 6 of \cite{aut2022}.
\end{proof}

\subsection{Lemma 2 and proof}

\begin{lemma}
	\label{lem:Lemma2}
	Let
	\[
	L_0(g)=E\left[Rg(X)-\log\{1+\exp(g(X))\}\right].
	\]
	Under the regularity conditions on $g$ and $g_0$, there exists a sufficiently small constant $c>0$ such that, for all $g$ satisfying $d(g,g_0)\leq c$,
	\[
	L_0(g)-L_0(g_0)\asymp -d^2(g,g_0),
	\]
	where $d^2(g,g_0)=\|g-g_0\|_{L^2}^2$.
\end{lemma}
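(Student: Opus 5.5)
We prove the claim by a second-order expansion of the population criterion $L_0$ around $g_0$, together with the fact that $g_0$ maximizes $L_0$. The expectation in $L_0$ is taken over the joint law of $(R,X)$. Here $P(R=1\mid X)=\sigma\{g_0(X)\}$ by model \eqref{eq:Lf} and Assumption \ref{ass:A1}, and we read $L_0$ as the population-scaled limit of $N^{-1}\ell(g)$ from \eqref{eq:Llf}. Set $h(z)=\log(1+e^z)$, so that $h'=\sigma$ and $h''=\sigma(1-\sigma)$.

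The first step is to condition on $X$. This gives
\[
L_0(g)=E\left[\sigma\{g_0(X)\}g(X)-h\{g(X)\}\right].
\]
The integrand is a pointwise function $\phi_x(t)=\sigma\{g_0(x)\}t-h(t)$ of the scalar $t=g(x)$. It is strictly concave, and it is maximized exactly at $t=g_0(x)$ because $\phi_x'(t)=\sigma\{g_0(x)\}-\sigma(t)$.

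The second step is a Taylor expansion with integral remainder at each $x$:
\[
\phi_x\{g(x)\}-\phi_x\{g_0(x)\}=-\int_0^1(1-u)\,h''\bigl(g_0(x)+u\{g(x)-g_0(x)\}\bigr)\,du\;\{g(x)-g_0(x)\}^2 .
\]
The first-order term vanishes by the choice of $g_0$. Taking expectations, it remains to bound the weight $h''$ above and below uniformly along the segment between $g_0$ and $g$.

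The upper bound is immediate. Since $h''\le 1/4$, we get $L_0(g_0)-L_0(g)\le \tfrac18 d^2(g,g_0)$.

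The lower bound is the main obstacle, because $h''(z)=\sigma(z)\{1-\sigma(z)\}$ decays exponentially in $|z|$. We need the intermediate points to stay in a fixed compact interval. This follows from the regularity conditions:
\begin{itemize}
\item $\|g_0\|_\infty\le M$, since $g_0\in\mathcal{H}(q,\bm{\gamma},\mathbf{d},\tilde{\mathbf{d}},M)$ by Assumption \ref{ass:B2}, and Hölder functions are bounded by the radius.
\item $\|g\|_\infty\le D$ for every $g\in\mathcal{G}(K,s,\mathbf{p},D)$.
\end{itemize}
Hence every intermediate point lies in $[-(M\vee D),\,M\vee D]$. On that interval
\[
h''\ge c_0:=\sigma(M\vee D)\{1-\sigma(M\vee D)\}>0,
\]
which yields $L_0(g_0)-L_0(g)\ge \tfrac{c_0}{2}d^2(g,g_0)$. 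Combining the two bounds gives $L_0(g)-L_0(g_0)\asymp -d^2(g,g_0)$. The restriction $d(g,g_0)\le c$ is then harmless, since the argument in fact holds for all $g$ in the sup-norm-bounded class.

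If one wants to avoid relying on the sup-norm bound $D$, the fallback is a truncation argument. On the set where $|g-g_0|$ is large, the loss grows at least linearly, and one uses the smallness of $d(g,g_0)$ to control that set. The bounded class makes this unnecessary, so I would state the result for $g$ in $\mathcal{G}_D$.
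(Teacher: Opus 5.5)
Your proof is correct and follows essentially the same route as the paper: a second-order Taylor expansion of $\log(1+e^z)$ around $g_0$, with the linear term vanishing because $E(R\mid X)=\sigma\{g_0(X)\}$, and two-sided bounds on the curvature $\sigma(1-\sigma)$ obtained from the sup-norm bounds on $g\in\mathcal{G}_D$ and on $g_0$. The differences are cosmetic: you use the integral-form remainder, which sidesteps the measurability of the paper's intermediate point $\xi(X)$; you give explicit constants $c_0/2$ and $1/8$ where the paper leaves $a/2$ and $b/2$ unspecified; and you justify $\|g_0\|_\infty\le M$ through the H\"older radius, which the paper simply asserts.
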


\begin{proof}[Proof of Lemma 2]
We have
\[
\begin{aligned}
L_0(g)-L_0(g_0)
&=
E\left[R\{g(X)-g_0(X)\}\right] \\
&\quad
-
E\left[
\log\{1+\exp(g(X))\}
-
\log\{1+\exp(g_0(X))\}
\right].
\end{aligned}
\]
Let $f(z)=\log(1+\exp z)$. Then
\[
f'(z)=\frac{\exp z}{1+\exp z},
\qquad
f''(z)=\frac{\exp z}{(1+\exp z)^2}.
\]
By Taylor expansion, for some $\xi(X)$ between $g_0(X)$ and $g(X)$,
\[
\begin{aligned}
f\{g(X)\}-f\{g_0(X)\}
&=
\frac{\exp\{g_0(X)\}}{1+\exp\{g_0(X)\}}
\{g(X)-g_0(X)\} \\
&\quad+
\frac{1}{2}
\frac{\exp\{\xi(X)\}}{[1+\exp\{\xi(X)\}]^2}
\{g(X)-g_0(X)\}^2.
\end{aligned}
\]
Therefore,
\[
\begin{aligned}
L_0(g)-L_0(g_0)
&=
E\left[
\left\{R-\frac{\exp\{g_0(X)\}}{1+\exp\{g_0(X)\}}\right\}
\{g(X)-g_0(X)\}
\right] \\
&\quad-
\frac{1}{2}
E\left[
\frac{\exp\{\xi(X)\}}{[1+\exp\{\xi(X)\}]^2}
\{g(X)-g_0(X)\}^2
\right].
\end{aligned}
\]
The first term is zero because
\[
E(R\mid X)=\frac{\exp\{g_0(X)\}}{1+\exp\{g_0(X)\}}.
\]
Moreover, since $g\in\mathcal{G}(K,s,\mathbf{p},D)$ and $g_0$ is bounded under the stated conditions, there exist constants $0<a<b<\infty$ such that
\[
a
\leq
\frac{\exp\{\xi(X)\}}{[1+\exp\{\xi(X)\}]^2}
\leq
b.
\]
It follows that
\[
-\frac{b}{2}\|g-g_0\|_{L^2}^2
\leq
L_0(g)-L_0(g_0)
\leq
-\frac{a}{2}\|g-g_0\|_{L^2}^2,
\]
which proves the claim.
\end{proof}

\subsection{Lemma 3 and proof}

\begin{lemma}
	\label{lem:Lemma3}
	Let
	\[
	\mathcal{B}_{\delta}
	=
	\{g\in\mathcal{G}(K,s,\mathbf{p},D):\|g-g_0\|_{L^2}\leq\delta\}.
	\]
	Define $\mathbb{G}_n=\sqrt{n}(\mathbb{P}_n-\mathbb{P})$,
	\[
	I=\mathbb{G}_n(g-g_0),
	\]
	and
	\[
	II=
	\mathbb{G}_n
	\left[
	\log(1+\exp g)-\log(1+\exp g_0)
	\right].
	\]
	Then
	\[
	\mathbb{E}^*\sup_{g\in\mathcal{B}_{\delta}}|I|
	=
	O\left(
	\delta\sqrt{s\log\frac{U}{\delta}}
	+
	\frac{s}{\sqrt n}\log\frac{U}{\delta}
	\right),
	\]
	and
	\[
	\mathbb{E}^*\sup_{g\in\mathcal{B}_{\delta}}|II|
	=
	O\left(
	\delta\sqrt{s\log\frac{U}{\delta}}
	+
	\frac{s}{\sqrt n}\log\frac{U}{\delta}
	\right),
	\]
	where $\mathbb{E}^*$ denotes outer expectation and
	\[
	U=K\prod_{k=0}^K(p_k+1)\sum_{k=0}^Kp_kp_{k+1}.
	\]
\end{lemma}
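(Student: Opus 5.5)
The plan is to bound both suprema with a single maximal inequality for bounded function classes: a local maximal inequality in the style of Lemma 3.4.2 of van der Vaart and Wellner, stated through the bracketing integral, or equivalently its uniform-entropy version. For a class $\mathcal{F}$ with envelope bounded by a constant $F$ and $\|f\|_{L^2(P)}\le\delta$ for all $f\in\mathcal{F}$, it reads
\[
\mathbb{E}^*\|\mathbb{G}_n\|_{\mathcal{F}}\lesssim J(\delta,\mathcal{F})\left(1+\frac{J(\delta,\mathcal{F})F}{\delta^2\sqrt n}\right),
\qquad
J(\delta,\mathcal{F})=\int_0^\delta\sqrt{1+\log\mathcal{N}_{[\,]}(\epsilon,\mathcal{F},L^2(P))}\,d\epsilon .
\]
I would apply this to two classes,
\[
\mathcal{F}_{I,\delta}=\{g-g_0:g\in\mathcal{B}_\delta\},
\qquad
\mathcal{F}_{II,\delta}=\{\log(1+e^{g})-\log(1+e^{g_0}):g\in\mathcal{B}_\delta\}.
\]
Both are uniformly bounded by $2\max(D,\|g_0\|_\infty)$, using $\|g\|_\infty\le D$ and the boundedness of $g_0$ implied by Assumption \ref{ass:B2}. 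For the $L^2$ radius, membership in $\mathcal{B}_\delta$ gives $\|f\|_{L^2}\le\delta$ directly for the first class. For the second class, the same mean-value argument used in the proof of Lemma \ref{lem:Lemma1}, with $h'(z)\in(0,1)$, shows that $z\mapsto\log(1+e^z)$ is 1-Lipschitz, so its $L^2$ radius is also at most $\delta$.

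Next comes the entropy bound. The Lipschitz property transfers brackets: an $\epsilon$-bracket $[g^L,g^U]$ for $g$ maps to the $\epsilon$-bracket $[h(g^L),h(g^U)]$, because $h$ is monotone. A sup-norm cover of $\mathcal{G}(K,s,\mathbf{p},D)$ also yields brackets of twice the width. It therefore suffices to bound $\log\mathcal{N}(\epsilon,\mathcal{G}(K,s,\mathbf{p},D),\|\cdot\|_\infty)$, and for this I would invoke Lemma 6 of \cite{aut2022} (Schmidt-Hieber's Lemma 5), which states
\[
\log\mathcal{N}(\epsilon,\mathcal{G}(K,s,\mathbf{p},D),\|\cdot\|_\infty)\le (s+1)\log\bigl(2\epsilon^{-1}(K+1)V^2\bigr),
\qquad V=\prod_{k}(p_k+1).
\]
After absorbing constants this is $\lesssim s\log(U/\epsilon)$ with $U$ as defined in the statement. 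Shifting by the fixed $g_0$ does not change the entropy.

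Then I integrate. Using $\int_0^\delta\sqrt{\log(U/\epsilon)}\,d\epsilon\lesssim\delta\sqrt{\log(U/\delta)}$, valid since $U/\delta\ge e$ in our regime, I get
\[
J(\delta)\lesssim\delta\sqrt{s\log(U/\delta)}.
\]
Substituting into the maximal inequality gives
\[
J(\delta)+\frac{J(\delta)^2}{\delta^2\sqrt n}\lesssim \delta\sqrt{s\log\frac U\delta}+\frac{s}{\sqrt n}\log\frac U\delta,
\]
which is the claimed bound for both $I$ and $II$.

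I expect the main obstacle to be measurability and bookkeeping rather than the inequality itself: checking that the network entropy bound holds in the form $s\log(U/\epsilon)$ with exactly this $U$ (the depth factor $K$ and the product of widths must match Lemma 6), and that the term $\mathbb{G}_n$ acts on $x$ only. If bracketing entropy gave trouble, I would fall back on the uniform-entropy version (Theorem 2.1 of van der Vaart and Wellner 2011), where sup-norm covers again control the $L^2(Q)$ covering numbers uniformly in $Q$.
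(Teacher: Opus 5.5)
Your proposal is correct and follows the same route as the paper. It uses the same two centered classes, the 1-Lipschitz property of $z\mapsto\log(1+e^{z})$, the sparse-network entropy bound $\log\mathcal{N}_{[\,]}(\epsilon)\lesssim s\log(U/\epsilon)$, the bracketing integral $J_{[\,]}(\delta)\lesssim\delta\sqrt{s\log(U/\delta)}$, and Lemma 3.4.2 of van der Vaart and Wellner; you are also more explicit than the paper about the bounded envelope (which the paper silently absorbs into the constant), how sup-norm covers yield $L^2(P)$ brackets, and why monotonicity lets brackets pass through $\log(1+e^{z})$.
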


\begin{proof}[Proof of Lemma 3]
Let
\[
\mathcal{F}_{1\delta}=\{g-g_0:g\in\mathcal{B}_{\delta}\},
\]
and
\[
\mathcal{F}_{2\delta}
=
\{\log(1+\exp g)-\log(1+\exp g_0):g\in\mathcal{B}_{\delta}\}.
\]
For $i=1,2$, write
\[
\|\mathbb{G}_n\|_{\mathcal{F}_{i\delta}}
=
\sup_{f\in\mathcal{F}_{i\delta}}|\mathbb{G}_n f|.
\]
For any $g,g_1\in\mathcal{B}_{\delta}$,
\[
E\{(g-g_1)^2\}\lesssim d^2(g,g_1).
\]
Moreover, since $z\mapsto\log(1+\exp z)$ is Lipschitz with constant one,
\[
E\left[
\{\log(1+\exp g)-\log(1+\exp g_1)\}^2
\right]
\leq
E\{g-g_1\}^2
\lesssim
d^2(g,g_1).
\]
Therefore, by the sparse-DNN entropy bound,
\[
\log\mathcal{N}_{[]}(\epsilon,\mathcal{F}_{i\delta},L^2(P))
\lesssim
s\log\left(\frac{U}{\epsilon}\right),
\qquad i=1,2,
\]
provided $p\leq s$ and $\delta\leq U$. Consequently,
\[
\begin{aligned}
J_{[]}(\delta,\mathcal{F}_{i\delta})
&:=
\int_0^\delta
\sqrt{
1+\log\mathcal{N}_{[]}(\epsilon,\mathcal{F}_{i\delta},L^2(P))
}
\,d\epsilon \\
&\lesssim
\int_0^\delta
\sqrt{
1+s\log\left(\frac{U}{\epsilon}\right)
}
\,d\epsilon \\
&\asymp
\delta\sqrt{s\log\left(\frac{U}{\delta}\right)}.
\end{aligned}
\]
Combining this bound with Lemma 3.4.2 of \cite{aut1996}, we obtain
\[
\mathbb{E}^*\|\mathbb{G}_n\|_{\mathcal{F}_{i\delta}}
\lesssim
J_{[]}(\delta,\mathcal{F}_{i\delta})
\left\{
1+
\frac{J_{[]}(\delta,\mathcal{F}_{i\delta})}{\delta^2\sqrt n}
\right\},
\]
which yields
\[
\mathbb{E}^*\|\mathbb{G}_n\|_{\mathcal{F}_{i\delta}}
=
O\left(
\delta\sqrt{s\log\frac{U}{\delta}}
+
\frac{s}{\sqrt n}\log\frac{U}{\delta}
\right).
\]
The stated bounds for $I$ and $II$ follow.
\end{proof}

\subsection{Proof of Theorem 1}

Let $\pi_i^A=\pi^A(x_i)$ satisfy
\[
\log\left(\frac{\pi_i^A}{1-\pi_i^A}\right)=g(x_i).
\]
For the theoretical argument, define the normalized population log-likelihood and pseudo-log-likelihood by
\[
L_N(g)=\frac{1}{N}
\left[
\sum_{i\in S_A}g(x_i)
-
\sum_{i=1}^N\log\{1+\exp(g(x_i))\}
\right],
\]
and
\[
L_N^*(g)=\frac{1}{N}
\left[
\sum_{i\in S_A}g(x_i)
-
\sum_{i\in S_B}d_i^B\log\{1+\exp(g(x_i))\}
\right].
\]
Let
\[
\ell(g)=Rg(X)-\log\{1+\exp(g(X))\},
\qquad
L_0(g)=E\{\ell(g)\}.
\]
The only difference between $L_N(g)$ and $L_N^*(g)$ is that the unavailable finite-population total
\[
\sum_{i=1}^N\log\{1+\exp(g(x_i))\}
\]
is replaced by its Horvitz--Thompson estimator based on the reference probability sample $S_B$. By Condition \ref{cond:C8}, for any fixed $D>0$,
\begin{equation}
	\label{eq:appendix-A1}
	\sup_{g\in\mathcal{G}(K,s,\mathbf{p},D)}
	|L_N(g)-L_N^*(g)|
	=
	O_p(N^{-1/2}).
\end{equation}

By Lemma \ref{lem:Lemma1},
\[
\mathcal{F}_1=\{Rg(X):g\in\mathcal{G}(K,s,\mathbf{p},D)\},
\qquad
\mathcal{F}_2=\{\log(1+\exp\{g(X)\}):g\in\mathcal{G}(K,s,\mathbf{p},D)\}
\]
are $P$-Glivenko--Cantelli. Hence,
\begin{equation}
	\label{eq:appendix-A2}
	\sup_{g\in\mathcal{G}(K,s,\mathbf{p},D)}
	|L_N(g)-L_0(g)|
	\stackrel{p}{\to}0.
\end{equation}
Let $L_0^*(g)=E\{L_N^*(g)\}$. By the unbiasedness of the Horvitz--Thompson replacement,
\begin{equation}
	\label{eq:appendix-A3}
	L_0^*(g)=L_0(g).
\end{equation}
Combining \eqref{eq:appendix-A1}--\eqref{eq:appendix-A3} gives
\begin{equation}
	\label{eq:appendix-A4}
	\sup_{g\in\mathcal{G}(K,s,\mathbf{p},D)}
	|L_N^*(g)-L_0^*(g)|
	\stackrel{p}{\to}0.
\end{equation}

Define
\[
\tilde g
=
\arg\min_{g\in\mathcal{G}(K,s,\mathbf{p},D/2)}
\|g-g_0\|_{L^2}.
\]
By the DNN approximation result for composite H\"older functions,
\[
\|\tilde g-g_0\|_{L^2}
=
O(\gamma_n\log^2 n).
\]
Let $\hat g_D^*$ be a maximizer of $L_N^*(g)$ over $\mathcal{G}(K,s,\mathbf{p},D)$. Since $\hat g_D^*$ maximizes the pseudo-log-likelihood,
\begin{equation}
	\label{eq:appendix-A5}
	L_N^*(\hat g_D^*)
	\geq
	L_N^*(\tilde g)
	=
	L_N^*(g_0)-o_p(1).
\end{equation}
By Lemma \ref{lem:Lemma2}, for every sufficiently small $\epsilon>0$,
\begin{equation}
	\label{eq:appendix-A6}
	\sup_{d(g,g_0)\geq\epsilon}L_0^*(g)
	=
	\sup_{d(g,g_0)\geq\epsilon}L_0(g)
	<
	L_0(g_0)
	=
	L_0^*(g_0).
\end{equation}
The conditions of Theorem 5.7 in \cite{aut2000} therefore follow from \eqref{eq:appendix-A4}--\eqref{eq:appendix-A6}, and hence
\[
d(\hat g_D^*,g_0)\stackrel{p}{\to}0.
\]

We next establish the convergence rate. Let
\[
\mathcal{A}_{\delta}
=
\{g\in\mathcal{G}(K,s,\mathbf{p},D):\delta/2\leq d(g,g_0)\leq\delta\}.
\]
By Lemma \ref{lem:Lemma3},
\begin{equation}
	\label{eq:app-T1}
	\sup_{g\in\mathcal{B}_\delta}
	\left|
	(L_N-L_0)(g)-(L_N-L_0)(g_0)
	\right|
	=
	O_p\left(
	\delta\sqrt{\frac{s\log(U/\delta)}{n}}
	+
	\frac{s}{n}\log\frac{U}{\delta}
	\right).
\end{equation}
Because $L_N^*(g)$ differs from $L_N(g)$ only through the Horvitz--Thompson replacement of the population total, the same stochastic bound holds for $L_N^*$:
\begin{equation}
	\label{eq:appendix-A7}
	\mathbb{E}^*
	\sup_{g\in\mathcal{A}_{\delta}}
	\sqrt n
	\left|
	(L_N^*-L_0^*)(g)-(L_N^*-L_0^*)(g_0)
	\right|
	\lesssim
	\phi_n(\delta),
\end{equation}
where
\[
\phi_n(\delta)
=
\delta\sqrt{s\log\frac{U}{\delta}}
+
\frac{s}{\sqrt n}\log\frac{U}{\delta}.
\]
Moreover, Lemma \ref{lem:Lemma2} implies
\begin{equation}
	\label{eq:appendix-A8}
	\sup_{g\in\mathcal{A}_\delta}
	\{L_0^*(g)-L_0^*(g_0)\}
	\lesssim
	-\delta^2.
\end{equation}
Let
\[
\tau_n=\gamma_n\log^2 n.
\]
By Assumption \ref{ass:B1}, $\tau_n^{-2}\phi_n(\tau_n)\lesssim\sqrt n$. In addition, the approximation result gives
\[
|L_N^*(\tilde g)-L_N^*(g_0)|
=
O_p(\tau_n^2).
\]
Since $\hat g_D^*$ maximizes $L_N^*$,
\begin{equation}
	\label{eq:appendix-A10}
	L_N^*(\hat g_D^*)
	\geq
	L_N^*(\tilde g)
	\geq
	L_N^*(g_0)-O_p(\tau_n^2).
\end{equation}
Applying Theorem 3.4.1 of \cite{aut1996} with \eqref{eq:appendix-A7}--\eqref{eq:appendix-A10} yields
\[
d(\hat g_D^*,g_0)
=
O_p(\tau_n).
\]
The preceding argument is carried out on the bounded sparse DNN class $\mathcal{G}_D=\mathcal{G}(K,s,\mathbf{p},D)$, where $D$ is chosen sufficiently large so that $g_0$ is contained in the corresponding bounded function class up to the approximation error allowed by Assumption \ref{ass:B2}. Since the estimator in \eqref{eq:mle} is defined over $\mathcal{G}_D$, we have $\hat g=\hat g_D^*$. Therefore,
\[
\|\hat g-g_0\|_{L^2}
=O_p(\tau_n)=
O_p(\gamma_n\log^2 n),
\]
which proves Theorem 1.

%%%%%%%%%%%%%%%%%%%
%%%%%%%%%%%%%%%
\subsection{Proof of Theorem 2}

We first prove the convergence rate for the DIPW estimator under the DNN sampling-score model. This part of the proof uses the consistency rate of $\hat g$ from Theorem 
\ref{thm:g-rate}  and the estimated sampling scores truncated at level $\epsilon_n$ as specified in Condition \ref{cond:C9}.

The DIPW estimator in \eqref{eq:Ipsswdnne} is the solution to the estimating equation
\begin{equation*}
	\Phi_{n,\mathrm{DIPW}}(\mu)
	=
	\frac{1}{\hat N^A}
	\sum_{i\in S_A}
	\frac{y_i-\mu}{\hat{\pi}_i^A}
	=
	0,
\end{equation*}
where
\[
\hat N^A=\sum_{i\in S_A}\frac{1}{\hat{\pi}_i^A}.
\]
Evaluating this estimating function at the true finite population mean $\mu_y$, we have
\begin{equation}
	\label{eq:appendix-A11}
	\begin{aligned}
	\Phi_{n,\mathrm{DIPW}}(\mu_y)
	&=
	\frac{1}{\hat N^A}
	\sum_{i\in S_A}
	\frac{y_i-\mu_y}{\hat{\pi}_i^A} \\
	&=
	\frac{1}{\hat N^A}
	\sum_{i\in S_A}
	\frac{y_i-\mu_y}{\pi_i^A}
	+
	\frac{1}{\hat N^A}
	\sum_{i\in S_A}
	(y_i-\mu_y)
	\left(
	\frac{1}{\hat{\pi}_i^A}
	-
	\frac{1}{\pi_i^A}
	\right).
	\end{aligned}
\end{equation}
The first term in \eqref{eq:appendix-A11} is centered under the nonprobability sampling-score mechanism. Under Conditions \ref{cond:C1}, \ref{cond:C5}, and \ref{cond:C6}, its variance is of order $n_A^{-1}$. Specifically,
\begin{equation}
	\label{eq:appendix-A12}
	\begin{aligned}
	\mathrm{Var}\left\{
	\frac{1}{\hat N^A}
	\sum_{i\in S_A}
	\frac{y_i-\mu_y}{\pi_i^A}
	\right\}
	&=
	O\left(
	\frac{1}{(\hat N^A)^2}
	\sum_{i=1}^N
	\frac{(\pi_i^A)(1-\pi_i^A)(y_i-\mu_y)^2}{(\pi_i^A)^2}
	\right)  \\
	&=
	O\left(
	\frac{1}{(\hat N^A)^2}
	\sum_{i=1}^N
	\frac{(y_i-\mu_y)^2}{\pi_i^A}
	\right)  \\
	&=
	O(n_A^{-1}),
	\end{aligned}
\end{equation}
where the last equality follows from Condition \ref{cond:C5} and the finite second moment condition in Condition \ref{cond:C6}. Hence,
\begin{equation}
	\label{eq:appendix-A12b}
	\frac{1}{\hat N^A}
	\sum_{i\in S_A}
	\frac{y_i-\mu_y}{\pi_i^A}
	=
	O_p(n_A^{-1/2}).
\end{equation}

Next, consider the second term in \eqref{eq:appendix-A11}. Since
\[
\pi_i^A=\sigma\{g_0(x_i)\},
\qquad
\hat{\pi}_i^A=\sigma\{\hat g(x_i)\}
\]
before truncation, the mean value theorem implies that, for some value $\tilde g_i$ between $\hat g(x_i)$ and $g_0(x_i)$,
\[
\hat{\pi}_i^A-\pi_i^A
=
\sigma'(\tilde g_i)\{\hat g(x_i)-g_0(x_i)\}.
\]
By Condition \ref{cond:C7},
\[
|\hat{\pi}_i^A-\pi_i^A|
\leq
L_\sigma |\hat g(x_i)-g_0(x_i)|.
\]
After truncation as in Condition \ref{cond:C9}, the inverse estimated sampling scores are bounded. Therefore, with probability tending to one, there exists a constant $C>0$ such that
\begin{equation}
	\label{eq:appendix-A12c}
	\left|
	\frac{1}{\hat{\pi}_i^A}
	-
	\frac{1}{\pi_i^A}
	\right|
	=
	\frac{|\hat{\pi}_i^A-\pi_i^A|}
	{\hat{\pi}_i^A\pi_i^A}
	\leq
	C|\hat g(x_i)-g_0(x_i)|.
\end{equation}
Using \eqref{eq:appendix-A12c} and the Cauchy--Schwarz inequality,
\begin{equation}
	\label{eq:appendix-A13}
	\begin{aligned}
	&\left|
	\frac{1}{\hat N^A}
	\sum_{i\in S_A}
	(y_i-\mu_y)
	\left(
	\frac{1}{\hat{\pi}_i^A}
	-
	\frac{1}{\pi_i^A}
	\right)
	\right|  \\
	&\qquad\leq
	\frac{C}{\hat N^A}
	\sum_{i\in S_A}
	|y_i-\mu_y|
	|\hat g(x_i)-g_0(x_i)|  \\
	&\qquad\leq
	\frac{C}{\hat N^A}
	\left\{
	\sum_{i\in S_A}(y_i-\mu_y)^2
	\right\}^{1/2}
	\left\{
	\sum_{i\in S_A}
	[\hat g(x_i)-g_0(x_i)]^2
	\right\}^{1/2}.
	\end{aligned}
\end{equation}
By Condition \ref{cond:C6},
\[
\sum_{i\in S_A}(y_i-\mu_y)^2=O_p(\hat N^A).
\]
Moreover, by Theorem \ref{thm:g-rate},
\[
\left\{
\frac{1}{\hat N^A}
\sum_{i\in S_A}
[\hat g(x_i)-g_0(x_i)]^2
\right\}^{1/2}
=
O_p(\gamma_n\log^2 n).
\]
Therefore,
\begin{equation}
	\label{eq:appendix-A13b}
	\left|
	\frac{1}{\hat N^A}
	\sum_{i\in S_A}
	(y_i-\mu_y)
	\left(
	\frac{1}{\hat{\pi}_i^A}
	-
	\frac{1}{\pi_i^A}
	\right)
	\right|
	=
	O_p(\gamma_n\log^2 n).
\end{equation}
Combining \eqref{eq:appendix-A11}, \eqref{eq:appendix-A12b}, and \eqref{eq:appendix-A13b}, and using Condition \ref{cond:C1}, gives
\begin{equation}
	\label{eq:appendix-A14}
	\Phi_{n,\mathrm{DIPW}}(\mu_y)
	=
	O_p(n_A^{-1/2})
	+
	O_p(\gamma_n\log^2 n)
	=
	O_p(\gamma_n\log^2 n).
\end{equation}

By applying a first-order Taylor expansion to
$\Phi_{n,\mathrm{DIPW}}(\hat{\mu}_{\mathrm{DIPW}})$ around $\mu_y$, we obtain
\begin{equation}
	\label{eq:appendix-A15}
	0
	=
	\Phi_{n,\mathrm{DIPW}}(\hat{\mu}_{\mathrm{DIPW}})
	=
	\Phi_{n,\mathrm{DIPW}}(\mu_y)
	+
	\phi_{n,\mathrm{DIPW}}(\tilde\mu)
	(\hat{\mu}_{\mathrm{DIPW}}-\mu_y),
\end{equation}
where $\tilde\mu$ lies between $\hat{\mu}_{\mathrm{DIPW}}$ and $\mu_y$, and
\[
\phi_{n,\mathrm{DIPW}}(\mu)
=
\frac{\partial\Phi_{n,\mathrm{DIPW}}(\mu)}{\partial\mu}
=
-\frac{1}{\hat N^A}
\sum_{i\in S_A}
\frac{1}{\hat{\pi}_i^A}.
\]
Under Conditions \ref{cond:C1}, \ref{cond:C5}, and \ref{cond:C9}, $\phi_{n,\mathrm{DIPW}}(\tilde\mu)$ is bounded away from zero with probability tending to one. To see this, Condition \ref{cond:C5} implies that $\pi_i^A$ is uniformly of order $n_A/N$, and Condition \ref{cond:C1} implies that $n_A/N$ is bounded away from zero. Together with the truncation in Condition \ref{cond:C9}, the inverse estimated sampling scores are uniformly bounded. Hence,
\[
0<c_\phi
\leq
|\phi_{n,\mathrm{DIPW}}(\tilde\mu)|
\leq
C_\phi<\infty
\]
with probability tending to one, for some constants $c_\phi$ and $C_\phi$. It follows from \eqref{eq:appendix-A15} that
\begin{equation}
	\label{eq:appendix-A17}
	|\hat{\mu}_{\mathrm{DIPW}}-\mu_y|
	=
	|\phi_{n,\mathrm{DIPW}}(\tilde\mu)|^{-1}
	|\Phi_{n,\mathrm{DIPW}}(\mu_y)|
	=
	O_p(\gamma_n\log^2 n).
\end{equation}

We next prove the convergence rate for the DDR estimator. Assume that
\[
\hat{\beta}-\beta^*=O_p(n_A^{-1/2})
\]
for some fixed $\beta^*$, regardless of whether the working outcome regression model is correctly specified. Treating $\hat{\mu}_{\mathrm{DDR}}$ in \eqref{eq:Drednn} as a function of $\hat{\beta}$, a first-order Taylor expansion around $\beta^*$ gives
\begin{equation}
	\label{eq:appendix-A18}
	\begin{aligned}
	\hat{\mu}_{\mathrm{DDR}}(\hat{\beta})
	&=
	\hat{\mu}_{\mathrm{DDR}}(\beta^*)
	+
	\left.
	\frac{\partial \hat{\mu}_{\mathrm{DDR}}(\beta)}
	{\partial\beta^\top}
	\right|_{\beta=\beta^*}
	(\hat{\beta}-\beta^*)  \\
	&\quad+
	O_p(n_A^{-1}).
	\end{aligned}
\end{equation}
The derivative term is
\begin{equation}
	\label{eq:appendix-A18b}
	\left.
	\frac{\partial \hat{\mu}_{\mathrm{DDR}}(\beta)}
	{\partial\beta^\top}
	\right|_{\beta=\beta^*}
	=
	\frac{1}{\hat N^B}
	\sum_{i\in S_B}d_i^B\dot m(x_i,\beta^*)
	-
	\frac{1}{\hat N^A}
	\sum_{i\in S_A}
	\frac{\dot m(x_i,\beta^*)}{\hat{\pi}_i^A},
\end{equation}
where
\[
\dot m(x,\beta)=\frac{\partial m(x,\beta)}{\partial\beta}.
\]
By Conditions \ref{cond:C2} and \ref{cond:C4},
\begin{equation}
	\label{eq:appendix-A18c}
	\frac{1}{\hat N^B}
	\sum_{i\in S_B}
	d_i^B\dot m(x_i,\beta^*)
	-
	\frac{1}{N}
	\sum_{i=1}^N
	\dot m(x_i,\beta^*)
	=
	o_p(1).
\end{equation}
Similarly, by the same argument used for the DIPW estimator and the convergence of $\hat g$,
\begin{equation}
	\label{eq:appendix-A18d}
	\frac{1}{\hat N^A}
	\sum_{i\in S_A}
	\frac{\dot m(x_i,\beta^*)}{\hat{\pi}_i^A}
	-
	\frac{1}{N}
	\sum_{i=1}^N
	\dot m(x_i,\beta^*)
	=
	o_p(1).
\end{equation}
Combining \eqref{eq:appendix-A18b}--\eqref{eq:appendix-A18d} gives
\[
\left.
\frac{\partial \hat{\mu}_{\mathrm{DDR}}(\beta)}
{\partial\beta^\top}
\right|_{\beta=\beta^*}
=
o_p(1).
\]
Therefore,
\[
\left.
\frac{\partial \hat{\mu}_{\mathrm{DDR}}(\beta)}
{\partial\beta^\top}
\right|_{\beta=\beta^*}
(\hat{\beta}-\beta^*)
=
o_p(1)O_p(n_A^{-1/2})
=
o_p(n_A^{-1/2}).
\]
From \eqref{eq:appendix-A18}, we obtain
\begin{equation}
	\label{eq:appendix-A18e}
	\hat{\mu}_{\mathrm{DDR}}(\hat{\beta})
	=
	\hat{\mu}_{\mathrm{DDR}}(\beta^*)
	+
	o_p(n_A^{-1/2}).
\end{equation}

Now decompose $\hat{\mu}_{\mathrm{DDR}}(\beta^*)$ into two terms:
\[
\hat{\mu}_{\mathrm{DDR}}(\beta^*)=T_1+T_2,
\]
where
\begin{equation}
	\label{eq:appendix-T1T2}
	T_1=
	\frac{1}{\hat N^A}
	\sum_{i\in S_A}
	\frac{y_i-m(x_i,\beta^*)}{\hat{\pi}_i^A},
	\qquad
	T_2=
	\frac{1}{\hat N^B}
	\sum_{i\in S_B}
	d_i^B m(x_i,\beta^*).
\end{equation}
Let
\[
h_N=
\frac{1}{N}
\sum_{i=1}^N
\{y_i-m(x_i,\beta^*)\}.
\]
Then
\begin{equation}
	\label{eq:appendix-muy-decomp}
	\mu_y
	=
	h_N
	+
	\frac{1}{N}
	\sum_{i=1}^N
	m(x_i,\beta^*).
\end{equation}

We first study $T_2$. Write
\begin{equation}
	\label{eq:appendix-A19}
	\begin{aligned}
	T_2
	&=
	\frac{1}{\hat N^B}
	\sum_{i\in S_B}
	d_i^B m(x_i,\beta^*)  \\
	&=
	\frac{1}{N}
	\sum_{i=1}^N
	m(x_i,\beta^*)
	+
	\left[
	\frac{1}{\hat N^B}
	\sum_{i\in S_B}
	d_i^B m(x_i,\beta^*)
	-
	\frac{1}{N}
	\sum_{i=1}^N
	m(x_i,\beta^*)
	\right].
	\end{aligned}
\end{equation}
Using the standard H\'ajek expansion,
\[
\frac{1}{\hat N^B}
\sum_{i\in S_B}
d_i^B m(x_i,\beta^*)
=
\frac{1}{N}
\sum_{i=1}^N
m(x_i,\beta^*)
+
\frac{1}{N}
\sum_{i\in S_B}
d_i^B
\left[
m(x_i,\beta^*)
-
\frac{1}{N}
\sum_{j=1}^N
m(x_j,\beta^*)
\right]
+
O_p(n_B^{-1}).
\]
Therefore, by Conditions \ref{cond:C1} and \ref{cond:C4},
\begin{equation}
	\label{eq:appendix-A20}
	T_2
	-
	\frac{1}{N}
	\sum_{i=1}^N
	m(x_i,\beta^*)
	=
	O_p(n_B^{-1/2})
	=
	o_p(\gamma_n\log^2 n).
\end{equation}

Next consider $T_1$. Decompose it as
\begin{equation}
	\label{eq:appendix-T1-decomp}
	\begin{aligned}
	T_1
	&=
	\frac{1}{\hat N^A}
	\sum_{i\in S_A}
	\frac{y_i-m(x_i,\beta^*)}{\pi_i^A} \\
	&\quad+
	\frac{1}{\hat N^A}
	\sum_{i\in S_A}
	\{y_i-m(x_i,\beta^*)\}
	\left(
	\frac{1}{\hat{\pi}_i^A}
	-
	\frac{1}{\pi_i^A}
	\right)  \\
	&=:T_{11}+T_{12}.
	\end{aligned}
\end{equation}
For $T_{11}$, under Conditions \ref{cond:C1}, \ref{cond:C5}, and \ref{cond:C6}, the same variance calculation as in \eqref{eq:appendix-A12} gives
\begin{equation}
	\label{eq:appendix-A21}
	T_{11}-h_N
	=
	\frac{1}{\hat N^A}
	\sum_{i\in S_A}
	\frac{y_i-m(x_i,\beta^*)}{\pi_i^A}
	-
	h_N
	=
	O_p(n_A^{-1/2})
	=
	o_p(\gamma_n\log^2 n).
\end{equation}
For $T_{12}$, by \eqref{eq:appendix-A12c},
\[
\left|
\frac{1}{\hat{\pi}_i^A}
-
\frac{1}{\pi_i^A}
\right|
\leq
C|\hat g(x_i)-g_0(x_i)|
\]
with probability tending to one. Therefore,
\begin{equation}
	\label{eq:appendix-A22}
	\begin{aligned}
	|T_{12}|
	&\leq
	\frac{C}{\hat N^A}
	\sum_{i\in S_A}
	|y_i-m(x_i,\beta^*)|
	|\hat g(x_i)-g_0(x_i)|  \\
	&\leq
	\frac{C}{\hat N^A}
	\left\{
	\sum_{i\in S_A}
	[y_i-m(x_i,\beta^*)]^2
	\right\}^{1/2}
	\left\{
	\sum_{i\in S_A}
	[\hat g(x_i)-g_0(x_i)]^2
	\right\}^{1/2}  \\
	&=
	O_p(\gamma_n\log^2 n),
	\end{aligned}
\end{equation}
where the last equality follows from the finite moment condition and Theorem \ref{thm:g-rate}. Combining \eqref{eq:appendix-T1-decomp}, \eqref{eq:appendix-A21}, and \eqref{eq:appendix-A22}, we obtain
\begin{equation}
	\label{eq:appendix-A23}
	T_1-h_N
	=
	O_p(\gamma_n\log^2 n).
\end{equation}

Finally, using \eqref{eq:appendix-A18e}, \eqref{eq:appendix-T1T2}, and \eqref{eq:appendix-muy-decomp},
\begin{equation}
	\label{eq:appendix-A24}
	\begin{aligned}
	\hat{\mu}_{\mathrm{DDR}}-\mu_y
	&=
	\hat{\mu}_{\mathrm{DDR}}(\hat{\beta})-\mu_y \\
	&=
	\hat{\mu}_{\mathrm{DDR}}(\beta^*)-\mu_y
	+
	o_p(n_A^{-1/2})  \\
	&=
	(T_1+T_2)
	-
	\left[
	h_N+
	\frac{1}{N}
	\sum_{i=1}^N
	m(x_i,\beta^*)
	\right]
	+
	o_p(n_A^{-1/2})  \\
	&=
	(T_1-h_N)
	+
	\left[
	T_2-
	\frac{1}{N}
	\sum_{i=1}^N
	m(x_i,\beta^*)
	\right]
	+
	o_p(n_A^{-1/2}).
	\end{aligned}
\end{equation}
By \eqref{eq:appendix-A20} and \eqref{eq:appendix-A23}, and noting that $n_A^{-1/2}=o(\gamma_n\log^2 n)$ under the DNN rate regime considered here,
\[
|\hat{\mu}_{\mathrm{DDR}}-\mu_y|
=
O_p(\gamma_n\log^2 n).
\]
This proves Theorem 2. The result is a convergence-rate statement under the DNN sampling-score model. The usual doubly robust interpretation of $\hat{\mu}_{\mathrm{DDR}}$ follows from its augmented inverse-probability form, but the rate result above relies on consistency of the DNN sampling-score estimator.

%% List of references. Cite using the commands of the natbib package.
%% Insert the filenames of the bibliography files (.bib files) inside
%% the command \bibliography, without the extension and separated by
%% commas.
%\bibliography{}

\end{document}